\documentclass[11pt]{article}
\usepackage{amsmath,amsthm,amsfonts,amssymb,amscd, amsxtra, mathrsfs,enumitem, mathtools}
\usepackage{url}
\usepackage[margin=2.3 cm,nohead]{geometry}
\usepackage{color}
\usepackage{pdfsync}
\usepackage{hyperref}
\usepackage{graphicx}
\usepackage{subcaption}
\usepackage{caption}
\usepackage{booktabs}    % for \toprule, \midrule, \bottomrule
\usepackage{float}  
\usepackage[
  backend=biber,
  style=ieee,
  citestyle=numeric-comp, % << this is the key
  url=false,
  isbn=false,
  sortcites=true
]{biblatex}
\usepackage{multicol}
\usepackage{multirow}
\usepackage{siunitx}
\setlist[enumerate]{label=(\roman*), align=left}
\newcommand{\matM}{\mathbb M}
\newtheorem{theorem}{Theorem}
\newtheorem{lemma}[theorem]{Lemma}

\newtheorem{proposition}[theorem]{Proposition}

\newtheorem{remark}{Remark}

\newtheorem{assumption}{Assumption}

\usepackage{graphicx,float}
\usepackage{graphicx}
\usepackage{grffile} % permite nomes de arquivos com espaços

\usepackage{algorithm}
\usepackage{algpseudocode}

\newcommand{\R}{\mathbb{R}}
\newcommand{\setS}{\mathcal{S}}

\newcommand{\inner}[2]{\langle #1, #2 \rangle}
  
\DeclareMathOperator{\diag}{diag}  
\DeclareMathOperator{\argmin}{argmin}  

\usepackage{amssymb}
\usepackage{relsize}
\algnewcommand{\Input}[1]{%
  \State \textbf{Input:} {\raggedright #1}
}

\algnewcommand{\Initialize}[1]{%
  \State \textbf{Initialize:}
  \Statex \hspace*{\algorithmicindent}\parbox[t]{.8\linewidth}{\raggedright #1}
}

\algnewcommand{\Output}[1]{%
  \State \textbf{Output:} {\raggedright #1}
}

\begin{document}
%%%%%%%%%%%%%%%%%%%%%%%%%%%%%%%%%%%%%%%%%%%%%%%%%%%%%%%%%%%%%
\title{State-Dependent Metric Projection Neural Network for Variational Inequalities}

\author{
Mohammed Alshahrani \thanks{Department of Mathematics, King Fahd University of Petroleum \& Minerals, Dhahran, 31261, Saudi Arabia\\
Interdisciplinary Research Center for Smart Mobility and Logistics, King Fahd University of Petroleum \& Minerals, Dhahran, 31261, Saudi Arabia, (e-mail:{\tt mshahrani@kfupm.edu.sa}).}
}

\maketitle

\begin{abstract}
Projection-based dynamical systems and projection neural networks offer a continuous-time approach to solving variational inequalities by driving the state toward its projection onto the feasible set. However, most existing models are built on a fixed Euclidean or constant metric, which can lead to poor conditioning and slow convergence when the operator or feasible set geometry is highly anisotropic. This paper introduces a state-dependent metric projection dynamical system, referred to as a state-dependent scaled projection neural network (SD-SPNN), in which the geometry of the projection operator evolves smoothly with the state. The dynamics generalize classical projected dynamical systems and projection neural networks by embedding continuous-time preconditioning directly into the flow. Under standard monotonicity assumptions on the operator and mild regularity conditions on the metric, we establish existence of solutions, an exact equilibrium–solution correspondence with the underlying variational inequality, and Lyapunov-based stability properties. The framework unifies Euclidean, constant-metric, and state-dependent projection neural network within a single continuous-time model. Numerical experiments illustrate how state-dependent metrics reshape the transient geometry of the projected dynamics while converging to the same equilibrium.

\end{abstract}

\noindent
{\bf Keywords:} projection neural networks; variational inequalities; state-dependent metric; Riemannian geometry; scaled projection; Lyapunov stability; variable-metric methods\\

\medskip

\noindent
{\bf AMS subject classification:}  37N35, 47H05, 49J40, 65K10, 65K15, 68T07, 90C25, 90C33
% - 37N35 — Dynamical systems in optimization and economics  
% - 47H05 — Monotone operators and generalizations  
% - 49J40 — Variational inequalities  
% - 65K10 — Numerical optimization and variational techniques  
% - 65K15 — Numerical methods for variational inequalities and related problems  
% - 68T07 — Artificial neural networks and deep learning  
% - 90C25 — Convex programming  
% - 90C33 — Complementarity and equilibrium problems and variational inequalities

%%%%%%%%%%%%%%%%%%%%%%%%%%%%%%%%%%%%%%%%%%%%%%%%%%%%%%%%%%%%%%%%%%%%%%%
%%%%%%%%%%%%%%%%%%%%%%%%%%%%%%%%%%%%%%%%%%%%%%%%%%%%%%%%%%%%%%%%%%%%%%%
\section{Introduction}

Variational inequality (VI) problems provide a unifying framework for a wide
range of equilibrium models arising in optimization, game theory, network
analysis, and constrained dynamical systems. Given a closed convex set
$\mathcal S \subset \mathbb R^n$ and a mapping $F:\mathbb R^n \to \mathbb R^n$, the
classical variational inequality problem $\mathrm{VI}(F,\mathcal S)$ consists of
finding a point $\bar{x} \in \mathcal S$ such that
\[
\langle F(\bar{x}), x - \bar{x} \rangle \ge 0, \quad \forall x \in \mathcal S.
\]
Projection-based methods and their continuous-time counterparts have played a
central role in the numerical and dynamical treatment of such problems,
particularly under monotonicity-type assumptions on $F$.

A notable class of continuous-time approaches is given by projection dynamical
systems and projection neural networks, where the evolution of the state is
driven by the discrepancy between the current iterate and its projection onto
the feasible set. These models admit a clear equilibrium interpretation in terms
of VI solutions and allow Lyapunov-based stability analysis. Most existing
projection neural network, however, are formulated with respect to a fixed Euclidean
metric. While this choice leads to conceptually simple models, it may result in
poor conditioning and slow convergence when the operator $F$ is ill-conditioned
or when the geometry of the feasible set is poorly aligned with the Euclidean
structure. In discrete-time optimization, variable-metric and scaled projection methods are a natural remedy: by adapting the metric, they act as problem-aware preconditioners that improve conditioning and convergence behavior. This raises a continuous-time question: can one design projection neural network whose metric varies smoothly with the state, while retaining a clear equilibrium interpretation and stability guarantees?

In numerical optimization, variable-metric and scaled projection methods are
well established as effective tools for improving convergence behavior. By
introducing an appropriately chosen positive definite scaling matrix, such
methods exploit problem structure and mitigate ill-conditioning. This naturally
raises the question of whether similar ideas can be incorporated into
continuous-time projection neural network for variational inequalities. Allowing the
metric underlying the projection operator to depend on the current state,
however, fundamentally alters the structure of the resulting dynamical system:
the projection mapping itself becomes state dependent, and standard Lyapunov
arguments must account for the time variation of the metric along trajectories.

The objective of this paper is to initiate a systematic study of projection
dynamical systems for variational inequalities endowed with a state-dependent
metric. We propose a class of continuous-time dynamics based on metric
projections induced by a smoothly varying positive definite matrix and
investigate their well-posedness and stability properties. The analysis
explicitly accounts for the interaction between the projection operator and the
state-dependent metric and identifies conditions under which stability is
preserved despite this additional coupling.

The present work addresses this question by developing and analyzing a class of continuous-time projection neural network endowed with a state-dependent metric, and by identifying conditions under which their trajectories remain well-posed and stable.

\paragraph{Contributions.}
The main contributions of this paper are as follows:
\begin{itemize}
\item We introduce a state-dependent metric projection neural network for
variational inequalities, generalizing classical projection neural network based on
the Euclidean metric.
\item We establish an equilibrium characterization showing that solutions of
$\mathrm{VI}(F,\mathcal S)$ coincide with stationary points of the proposed
state-dependent projection neural network.
\item We develop a Lyapunov-based stability analysis that guarantees
well-posedness and asymptotic, and under stronger assumptions exponential,
convergence, under explicit regularity and boundedness conditions on the metric.
\item We present numerical experiments illustrating how constant and
state-dependent metric choices influence convergence behavior, particularly in
ill-conditioned settings.
\end{itemize}

At a conceptual level, the proposed framework may be viewed as a continuous-time
analogue of variable-metric projection methods, while simultaneously extending
projection neural networks beyond fixed-metric formulations. It provides a
flexible analytical setting in which metric design, stability theory, and
projection neural network can be studied in a unified manner. Detailed comparisons
with existing projection neural networks and scaled projection methods, as well
as a discussion of algorithmic implications, are deferred to later sections.

The remainder of the paper is organized as follows. Section~\ref{sec:notation} introduces
notation, definitions, and standing assumptions. Section~\ref{sec:well-posed} establishes
well-posedness and characterizes equilibria of the proposed dynamics. Sections~\ref{sec:stabI},\ref{sec:stabII} and \ref{sec:stabIII} are devoted to stability and convergence analysis. Section~\ref{sec:example} gives an illustrative example. Numerical experiments are presented in Section~\ref{sec:numerical}. Section~\ref{sec:relatedwork} is dedicated to the literature review and the related work, followed by concluding remarks and perspectives for further research in Section~\ref{sec:futurework}.

\section{Formulation and Notation}\label{sec:notation}

Let $\setS \subset \R^n$ be a nonempty closed convex set and let
$F : \R^n \to \R^n$ be a continuous mapping. The variational inequality
problem associated with $(F,\setS)$, denoted by $\mathrm{VI}(F,\setS)$,
consists of finding a point $\bar x \in \setS$ such that
\begin{equation}\label{eq:VI}
    \inner{F(\bar x)}{x - \bar x} \ge 0,
    \quad \forall x \in \setS.
\end{equation}

Throughout the paper, $\inner{\cdot}{\cdot}$ denotes the standard
Euclidean inner product on $\R^n$, with associated norm
$\|x\| = \sqrt{\inner{x}{x}}$.
Given a symmetric positive definite matrix $M \in \R^{n \times n}$,
we define the $M$--induced inner product and norm by
\[
\inner{x}{y}_M := x^T M y,
\qquad
\|x\|_M := \sqrt{\inner{x}{x}_M}.
\]

We consider a state-dependent metric generated by a mapping
$\matM : \R^n \to \R^{n \times n}$ satisfying
\[
\left[\matM(x)\right]^T = \matM(x),
\qquad
y^T \matM(x) y > 0 \quad \forall y \neq 0,
\]
for all $x \in \R^n$.
The corresponding inner product $\inner{\cdot}{\cdot}_{\matM(x)}$
and norm $\|\cdot\|_{\matM(x)}$ are referred to as
\emph{state-dependent}. 
\subsection*{Metric projection}

For a fixed $x \in \R^n$, we define the projection onto $\setS$
with respect to the metric induced by $\matM(x)$ as
\begin{equation}\label{eq:metric_projection}
    P_{\setS,\matM(x)}(y)
    :=
    \argmin_{z \in \setS}
    \frac{1}{2}\|z - y\|^2_{\matM(x)}.
\end{equation}
Since $\matM(x)$ is symmetric positive definite and $\setS$ is closed
and convex, the minimizer in \eqref{eq:metric_projection} exists and
is unique. Note that if $\matM(x)\equiv I$ (identity), then \eqref{eq:metric_projection} is the Euclidean projection. Whereas \eqref{eq:metric_projection} is called the scaled projection when $\matM(x)\equiv M$ (constant), see for example \cite{bonettini_scaled_2009}.  

The metric projection admits the following variational
characterization, which we state it as a lemma and include the proof for completeness. 
\begin{lemma}
For given $x,y \in \R^n$,  let $z \in \setS$. Then
\begin{equation}\label{eq:proj_char}
    z = P_{\setS,\matM(x)}(y)
    \quad \Longleftrightarrow \quad
    (z - y)^T \matM(x)(w - z) \ge 0,
    \quad \forall w \in \setS.
\end{equation}
\end{lemma}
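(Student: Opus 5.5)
Fix $x$ and write $M := \matM(x)$, which is symmetric positive definite, and set $\phi(z) := \tfrac12\|z-y\|_M^2$. The plan is the standard first-order optimality argument for minimizing a strictly convex differentiable function over a closed convex set, combined with a one-dimensional convexity argument along segments in $\setS$. Since $M$ is symmetric, the gradient is $\nabla\phi(z) = M(z-y)$. This symmetry is the only place it is needed; otherwise the gradient would be $\tfrac12(M+M^T)(z-y)$.

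($\Rightarrow$) Let $z = P_{\setS,M}(y)$ and take any $w\in\setS$. For $t\in(0,1]$, the point $z_t := z + t(w-z)$ lies in $\setS$ by convexity, so $\phi(z_t)\ge\phi(z)$. Expanding the quadratic gives
\[
\phi(z_t)-\phi(z) = t\,(z-y)^T M (w-z) + \tfrac{t^2}{2}\|w-z\|_M^2 \ge 0 .
\]
We divide by $t>0$ and let $t\downarrow 0$, which yields $(z-y)^T M (w-z)\ge 0$.

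($\Leftarrow$) Suppose $z\in\setS$ satisfies the inequality for all $w\in\setS$. Using the same exact expansion with $t=1$, for any $w\in\setS$ we get
\[
\phi(w)-\phi(z) = (z-y)^T M (w-z) + \tfrac12\|w-z\|_M^2 \ge \tfrac12\|w-z\|_M^2 \ge 0,
\]
so $z$ minimizes $\phi$ over $\setS$. Moreover the inequality is strict whenever $w\neq z$, by positive definiteness of $M$. Hence $z$ is the unique minimizer, i.e. $z=P_{\setS,M}(y)$, consistent with the existence and uniqueness already noted after \eqref{eq:metric_projection}.

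There is no real obstacle here. The only points requiring care are to use the exact quadratic identity, which avoids any appeal to general convex-analysis results, and to invoke the symmetry of $\matM(x)$ when identifying the cross term as $(z-y)^T M(w-z)$.
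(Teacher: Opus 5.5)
Your proof is correct and is essentially the paper's argument. The forward direction is the one-sided derivative of $t\mapsto \phi(z+t(w-z))$ at $t=0^+$, which you get from the exact quadratic expansion rather than by citing $\nabla\phi(z)=M(z-y)$. The converse uses the same identity $\phi(w)-\phi(z)=(z-y)^TM(w-z)+\tfrac12\|w-z\|_M^2$ as the paper; your strictness remark for uniqueness is a small addition, since the paper relies on the uniqueness of the minimizer stated after the definition of the projection.
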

\begin{proof}
Let $f:=P_{\setS,\matM(x)}$ then
\[
f(u)=\tfrac12 (u-y)^\top \matM(x)(u-y).
\]
Then $f$ is convex and $\nabla f(u)=\matM(x)(u-y)$. If $z=P_{\mathcal S,M}(y)$, then for any $w\in\mathcal S$ and $t\in[0,1]$,
$z+t(w-z)\in\mathcal S$. Define $\varphi(t)=f(z+t(w-z))$. Since $t=0$ is a
minimizer of $\varphi$,
\[
0\le \varphi'(0^+)=\nabla f(z)^\top (w-z)=(z-y)^\top M(w-z).
\]

On the other hand, if $(z-y)^\top M(w-z)\ge 0$ for all $w\in\mathcal S$. Then for any $w\in\mathcal S$,
\[
\begin{aligned}
f(w)-f(z)
&=\tfrac12 (w-y)^\top M(w-y)-\tfrac12 (z-y)^\top M(z-y)\\
&=\tfrac12 (w-z)^\top M(w-z)+(z-y)^\top M(w-z)\ge 0.
\end{aligned}
\]
Hence $z$ minimizes $f$ over $\mathcal S$, i.e., $z=P_{\mathcal S,M}(y)$.
\end{proof}

\subsection*{Standing assumptions}

The analysis relies on the following assumptions, which will be
maintained throughout the paper.

\begin{assumption}[Uniform metric boundedness]\label{ass:metric_bounds}
There exists a constant $L > 1$ such that
\[
\|\matM(x)\| \le L
\quad \text{and} \quad
\|\matM(x)^{-1}\| \le L,
\qquad \forall x \in \setS.
\]
\end{assumption}

Assumption~\ref{ass:metric_bounds} implies that the eigenvalues of
$\matM(x)$ lie in $[L^{-1},L]$ uniformly on $\setS$, and hence all norms
$\|\cdot\|_{\matM(x)}$ are uniformly equivalent to the Euclidean norm. In fact, we have 
\[
\frac{1}{\sqrt{L}}\|z\|\le \|z\|_{\matM(x)}\le \sqrt{L} \|z\|, \quad \forall~z\in \R^n.
\]

In addition, when stability properties are investigated, further
regularity of the metric will be required.

\begin{assumption}[Metric regularity]\label{ass:metric_regular}
The mapping $\matM(\cdot)$ is locally Lipschitz continuous on $\setS$.
\end{assumption}

\subsection*{State-dependent projection neural network}

Motivated by the fixed-metric projection neural network commonly used for
solving variational inequalities, we consider the following
state-dependent projection neural network:
\begin{equation}\label{eq:SD_dynamics}
    \frac{dx}{dt}
    =
    \lambda
    \Bigl[
    P_{\setS,\matM^{-1}(x)}
    \bigl(x - \alpha \matM(x) F(x)\bigr)
    - x
    \Bigr],
\end{equation}
where $\lambda > 0$ and $\alpha > 0$ are given parameters. When $\matM(x)=I$, system \eqref{eq:SD_dynamics} is precisely the classical projected neural network.

A point $\bar x \in \setS$ is an equilibrium of
\eqref{eq:SD_dynamics} if and only if
\begin{equation}\label{eq:equilibrium}
    \bar x
    =
    P_{\setS,\matM^{-1}(\bar x)}
    \bigl(\bar x - \alpha \matM(\bar x) F(\bar x)\bigr).
\end{equation}
Using the characterization \eqref{eq:proj_char}, it follows that
$\bar x$ satisfies \eqref{eq:equilibrium} if and only if it solves
$\mathrm{VI}(F,\setS)$.

The remainder of the paper is devoted to establishing well-posedness
and stability properties of the dynamical system
\eqref{eq:SD_dynamics} under appropriate assumptions on $F$ and
$\matM(\cdot)$.

\section{Well-posedness and Equilibria}\label{sec:well-posed}

In this section, we establish basic properties of the
state-dependent projection dynamical system
\eqref{eq:SD_dynamics}, including existence and uniqueness
of trajectories and a precise characterization of its equilibria.

\subsection{Existence and uniqueness of trajectories}

Define the vector field
\[
\Phi(x)
:=
P_{\setS,\matM^{-1}(x)}
\bigl(x - \alpha \matM(x)F(x)\bigr) - x,
\qquad x \in \setS.
\]
Then the dynamics \eqref{eq:SD_dynamics} can be written compactly as
\[
\frac{dx}{dt} = \lambda \Phi(x).
\]

\begin{lemma}[Continuity of the vector field]\label{lem:Phi_cont}
Suppose that $F$ is continuous on $\setS$ and that
$\matM(\cdot)$ satisfies Assumption~\ref{ass:metric_regular}.
Then the mapping $\Phi : \setS \to \R^n$ is continuous.
\end{lemma}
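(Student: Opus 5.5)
The plan is to write $\Phi$ as a composition $\Phi(x) = Q(x, y(x)) - x$, where
\[
y(x) := x - \alpha\matM(x)F(x), \qquad Q(G,y) := P_{\setS,G}(y),
\]
and to show that $Q$ is jointly continuous in the pair $(G,y)$, with $G$ ranging over symmetric positive definite matrices. The map $x \mapsto y(x)$ is continuous on $\setS$, because $F$ is continuous and $\matM(\cdot)$ is locally Lipschitz, hence continuous, by Assumption~\ref{ass:metric_regular}. The map $x\mapsto \matM(x)^{-1}$ is continuous as well, since matrix inversion is continuous on invertible matrices. So once joint continuity of $Q$ is established, continuity of $\Phi$ follows by composition.

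For joint continuity, fix $(G_0,y_0)$ and take a sequence $(G_k,y_k)\to(G_0,y_0)$ with each $G_k$ symmetric positive definite. Set $z_k = P_{\setS,G_k}(y_k)$ and $z_0=P_{\setS,G_0}(y_0)$. The argument has two steps.

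\textbf{Uniform bounds and boundedness of $z_k$.} For $k$ large, the eigenvalues of $G_k$ lie in some interval $[m,\mu]$ with $m>0$, by continuity of eigenvalues. Fix any $w_0\in\setS$. Minimality gives $\|z_k-y_k\|_{G_k}\le \|w_0-y_k\|_{G_k}$, and therefore $\sqrt{m}\,\|z_k-y_k\|\le\sqrt{\mu}\,\|w_0-y_k\|$. Hence $(z_k)$ is bounded.

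\textbf{Passing to the limit in the characterization.} Apply \eqref{eq:proj_char} with metric $G_k$:
\[
(z_k-y_k)^T G_k (w-z_k)\ge 0 \qquad \forall w\in\setS .
\]
Take any convergent subsequence $z_{k_j}\to \hat z$. Then $\hat z\in\setS$ because $\setS$ is closed. Passing to the limit gives $(\hat z-y_0)^T G_0(w-\hat z)\ge0$ for all $w\in\setS$. By \eqref{eq:proj_char} and uniqueness of the projection, $\hat z=z_0$. Since every subsequence has a further subsequence converging to $z_0$, the whole sequence satisfies $z_k\to z_0$.

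The expected obstacle is the variation of the metric, which prevents use of the usual Euclidean nonexpansiveness. A quantitative alternative avoids compactness altogether. Add the characterization inequalities for $z_k$ (tested with $w=z_0$) and for $z_0$ (tested with $w=z_k$). This yields
\[
\|z_k-z_0\|_{G_0}^2 \le \langle y_k - y_0,\cdot\rangle\text{-type terms} + (z_k-y_k)^T(G_k-G_0)(z_0-z_k),
\]
and the right-hand side tends to zero by the boundedness established above. This would even give local Lipschitz continuity if $F$ were Lipschitz; that fact is useful later for uniqueness of trajectories, but it is not needed for the present lemma. Either route suffices, and I would present the compactness argument as the main proof.
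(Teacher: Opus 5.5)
Your proof is correct. At the top level it follows the paper's plan: write $\Phi$ as a composition and check that each piece is continuous. The key step, however, is handled differently.

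The paper's proof only asserts two \emph{separate} facts: that $P_{\setS,\matM^{-1}(x)}(\cdot)$ is continuous for each fixed $x$, and that the projection depends continuously on the metric ``under uniform boundedness.'' Turning these into the joint continuity actually needed requires two further ingredients. The first is that the Lipschitz constant of $y\mapsto P_{\setS,G}(y)$ is uniform over the metrics in play; it is at most $\sqrt{\lambda_{\max}(G)/\lambda_{\min}(G)}$, which Assumption~\ref{ass:metric_bounds} bounds. The second is the metric-continuity estimate of Lemma~\ref{lem:metric_projection_continuity}, which is only proved later.

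Your subsequence argument proves joint continuity of $(G,y)\mapsto P_{\setS,G}(y)$ directly. It uses only \eqref{eq:proj_char}, closedness of $\setS$, uniqueness of the projection, and local eigenvalue bounds near $G_0$. You therefore never need Assumption~\ref{ass:metric_bounds}: continuity of $x\mapsto\matM(x)^{-1}$ follows from continuity of matrix inversion, as you note. So you prove the lemma under exactly its stated hypotheses, and you also close the separate-versus-joint continuity gap that the paper's sketch glosses over.

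What the paper's route buys, once filled in, is a quantitative estimate that is reused for Lemma~\ref{lem:unique} and the nonexpansiveness analysis. Your ``quantitative alternative'' recovers this in one step. Applying Cauchy--Schwarz to your summed inequality gives
\[
\|z_k-z_0\|_{G_0}\le\|y_k-y_0\|_{G_0}+C\|G_k-G_0\|,
\]
where $C$ depends on $\lambda_{\min}(G_0)$ and a bound on $\|z_k-y_k\|$. This is essentially Lemma~\ref{lem:metric_projection_continuity} merged with nonexpansiveness in the argument.

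Two small points to fix. First, you wrote $\Phi(x)=Q(x,y(x))-x$, but the first argument should be $\matM(x)^{-1}$, since the projection in \eqref{eq:SD_dynamics} is taken in the metric $\matM^{-1}(x)$. Second, \eqref{eq:proj_char} is stated for $\matM(x)$, whereas you apply it to an arbitrary symmetric positive definite $G$. Its proof carries over verbatim, but you should say so.
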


\begin{proof}
The idea is to show that each component in the definition of $\Phi(x)$ depends continuously on $x$, and hence their composition is continuous. By Assumption~\ref{ass:metric_regular}, the mapping
$x \mapsto \matM(x)$ is locally Lipschitz on $\setS$, hence continuous,
and so is $x \mapsto \matM^{-1}(x)$ under
Assumption~\ref{ass:metric_bounds}.
Since $F$ is continuous, the mapping
\[
x \mapsto x - \alpha \matM(x)F(x)
\]
is continuous on $\setS$.
For each fixed $x$, the projection
$P_{\setS,\matM^{-1}(x)}(\cdot)$ is continuous, and its dependence
on the metric matrix is continuous under uniform boundedness.
Therefore, the composition defining $\Phi(x)$ is continuous.
\end{proof}

\begin{lemma}[Local existence of solutions]\label{lem:existence}
Under the assumptions of Lemma~\ref{lem:Phi_cont},
for every initial condition $x(0) \in \setS$, there exists
a local solution $x(\cdot)$ of \eqref{eq:SD_dynamics}.
\end{lemma}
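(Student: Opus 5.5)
The plan is to apply Peano's existence theorem to $\dot x=\lambda\Phi(x)$, with one difficulty: $\Phi$ is continuous only on $\setS$ by Lemma~\ref{lem:Phi_cont}, not on an open neighbourhood of $x(0)$. I will first extend the vector field continuously to all of $\R^n$, then solve the extended equation, and finally show the solution never leaves $\setS$, so that it solves \eqref{eq:SD_dynamics}.

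\textbf{Extension.} Let $\Pi:=P_{\setS,I}$ be the Euclidean projection, which is nonexpansive and hence continuous. Define
\[
\tilde\Phi(x):=P_{\setS,\matM^{-1}(\Pi x)}\bigl(x-\alpha\matM(\Pi x)F(\Pi x)\bigr)-x,\qquad x\in\R^n .
\]
This is a composition of continuous maps, with the continuity of the projection in both its argument and its metric exactly as in the proof of Lemma~\ref{lem:Phi_cont}. It satisfies $\tilde\Phi=\Phi$ on $\setS$. Peano's theorem on a closed ball $\bar B(x(0),r)$, where $\|\tilde\Phi\|\le K$, gives a $C^1$ solution $x(\cdot)$ of $\dot x=\lambda\tilde\Phi(x)$ on $[0,T]$ with $T=r/(\lambda K)$.

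\textbf{Invariance of $\setS$.} Write $p(t):=P_{\setS,\matM^{-1}(\Pi x(t))}(\cdots)\in\setS$, which is continuous in $t$. The equation becomes the linear ODE $\dot x=\lambda(p(t)-x)$, so by variation of constants
\[
x(t)=e^{-\lambda t}x(0)+\int_0^t \lambda e^{-\lambda(t-s)}p(s)\,ds .
\]
The weights $e^{-\lambda t}$ and $\lambda e^{-\lambda(t-s)}\,ds$ are nonnegative and integrate to $1$. Hence $x(t)$ is a limit of convex combinations of $x(0)$ and points $p(s)\in\setS$, and lies in $\setS$ because $\setS$ is closed and convex. It follows that $\Pi x(t)=x(t)$ and $\tilde\Phi(x(t))=\Phi(x(t))$, so $x(\cdot)$ is a local solution of \eqref{eq:SD_dynamics} staying in $\setS$.

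\textbf{Expected obstacle.} The main difficulty is the domain issue: $\Phi$ is defined only on $\setS$, and only continuity is available, so we get existence but not uniqueness. The extension-plus-convex-combination argument above is how I plan to handle it. If local Lipschitz continuity of $F$ and of the projection with respect to the metric is later available, the same construction also gives uniqueness via Picard–Lindelöf, but that is not needed here.
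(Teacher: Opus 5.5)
Your proof is correct, and it takes a more careful route than the paper. The paper's proof is one sentence: $\Phi$ is continuous on $\setS$, so Peano's theorem applies.

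Read literally, that sentence covers only initial points in the interior of $\setS$. There $\Phi$ is continuous on a ball around $x(0)$, and the Peano solution stays in $\setS$ for short times. For $x(0)\in\partial\setS$, or when $\setS$ has empty interior, the classical theorem needs a field that is continuous on a full neighbourhood. One must also check that the trajectory does not leave $\setS$, which is the only place where $\Phi$ is defined and where the hypotheses on $F$ and $\matM$ hold.

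Your two steps supply exactly what the paper omits. The extension via $\Pi$ uses continuity of $F$, $\matM$ and $\matM^{-1}$ only on $\setS$, which is all the hypotheses provide. The variation-of-constants formula then gives forward invariance of $\setS$, because the field has the relaxation form $p-x$ with $p\in\setS$.

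A route closer to the paper's one-liner is the Nagumo (viability) version of Peano's theorem on the closed set $\setS$. Its tangency condition holds automatically, since $x+\Phi(x)=T(x)\in\setS$ makes $\Phi(x)$ a feasible direction. Your convex-combination argument is in effect an elementary, self-contained proof of that viability for this particular structure. The same extension also adapts directly to the uniqueness claim of Lemma~\ref{lem:unique}, since $\Pi$ is $1$-Lipschitz.

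One cosmetic point: rename your bound $K$ on $\|\tilde\Phi\|$, which clashes with the Lipschitz constant of $F$ in Assumption~\ref{ass:F_Lipschitz}.
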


\begin{proof}
Since $\Phi(\cdot)$ is continuous on $\setS$,
the result follows from the classical Peano existence theorem
for ordinary differential equations.
\end{proof}

\begin{lemma}[Uniqueness under Lipschitz continuity]\label{lem:unique}
Assume in addition that $F$ is locally Lipschitz continuous on $\setS$.
Then, for every initial condition $x(0) \in \setS$,
the solution of \eqref{eq:SD_dynamics} is unique.
\end{lemma}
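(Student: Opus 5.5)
The plan is to show that $\Phi$ is locally Lipschitz on $\setS$ and then apply the Picard--Lindelöf theorem. Uniqueness then follows: two solutions starting at the same $x(0)$ have a difference $e(t)$ with $\|e(t)\|\le \lambda L_K\int_0^t\|e(s)\|\,ds$ while both stay in a compact set $K$, where $L_K$ is the local Lipschitz constant of $\Phi$ on $K$. Gronwall's inequality then forces $e\equiv 0$.

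Write $G(x):=\matM^{-1}(x)$ and $y(x):=x-\alpha\matM(x)F(x)$, so that $\Phi(x)=P_{\setS,G(x)}(y(x))-x$. Fix a compact neighbourhood $K$ of a point in $\setS$. On $K$, the map $F$ is Lipschitz and bounded, and $\matM$ is Lipschitz (Assumption~\ref{ass:metric_regular}) and bounded (Assumption~\ref{ass:metric_bounds}). A product of bounded Lipschitz maps is Lipschitz, so $y(\cdot)$ is Lipschitz on $K$. The identity $\matM^{-1}(x)-\matM^{-1}(x')=\matM^{-1}(x)\bigl(\matM(x')-\matM(x)\bigr)\matM^{-1}(x')$ together with $\|\matM^{-1}\|\le L$ shows that $G(\cdot)$ is Lipschitz on $K$ with constant at most $L^2$ times that of $\matM$.

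The main step, and the expected obstacle, is a joint Lipschitz estimate for the metric projection in both its argument and its metric. Let $z_i=P_{\setS,G_i}(y_i)$ for $i=1,2$, where $G_1,G_2$ have eigenvalues in $[L^{-1},L]$. The characterization \eqref{eq:proj_char} (applied with the metric $G_i$ in place of $\matM(x)$) gives $(z_1-y_1)^TG_1(z_2-z_1)\ge0$ and $(z_2-y_2)^TG_2(z_1-z_2)\ge0$. I would rewrite the second inequality with $G_1$ plus the correction $(G_2-G_1)$ and add the two. This yields
\[
\|z_1-z_2\|_{G_1}^2\le (y_1-y_2)^TG_1(z_1-z_2)+(z_2-y_2)^T(G_2-G_1)(z_1-z_2).
\]
Using $\|z_1-z_2\|_{G_1}^2\ge L^{-1}\|z_1-z_2\|^2$ and dividing by $\|z_1-z_2\|$ gives
\[
\|z_1-z_2\|\le L^2\|y_1-y_2\|+L\,\|z_2-y_2\|\,\|G_1-G_2\|.
\]
It remains to bound $\|z_2-y_2\|$ uniformly on $K$. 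Pick any fixed $w_0\in\setS$. By minimality of the projection, $\|z_2-y_2\|_{G_2}\le\|w_0-y_2\|_{G_2}$, so by norm equivalence $\|z_2-y_2\|\le L\|w_0-y_2\|$. This is bounded because $y(\cdot)$ is continuous on the compact set $K$.

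Combining the Lipschitz bounds for $y(\cdot)$ and $G(\cdot)$ with this projection estimate shows that $x\mapsto P_{\setS,G(x)}(y(x))$ is Lipschitz on $K$. Hence $\Phi$ is locally Lipschitz on $\setS$, and Picard--Lindelöf and Gronwall give uniqueness.

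One technical point is that $\Phi$ is defined only on $\setS$. I would handle this in one of two ways. The first is to note that the assumptions are used only along trajectories in $\setS$. The second is to extend $\matM$ and $F$ locally Lipschitz to $\R^n$ (say, by composing with the Euclidean projection onto $\setS$), which preserves the bounds. The same estimate then makes $\Phi$ locally Lipschitz on $\R^n$, and uniqueness follows for the resulting ODE.
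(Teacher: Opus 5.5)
Your proposal is correct and follows the same route as the paper: show that $\Phi$ is locally Lipschitz on $\setS$, then get uniqueness from Picard--Lindel\"of (equivalently, Gronwall). It is also more complete than the paper's proof, which only cites Lipschitz continuity of $P_{\setS,\matM^{-1}(x)}(\cdot)$ in its argument for fixed $x$ and never accounts for the metric $\matM^{-1}(x)$ varying with $x$; your combined variational-inequality estimate supplies exactly that step (the paper proves a related metric-Lipschitz bound only later, in Lemma~\ref{lem:metric_projection_continuity}), and you additionally handle the fact that $\Phi$ is defined only on $\setS$.
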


\begin{proof}
Under Assumption~\ref{ass:metric_regular}, the mapping
$x \mapsto \matM(x)$ is locally Lipschitz, and hence so are
$x \mapsto \matM^{-1}(x)$ and
$x \mapsto x - \alpha \matM(x)F(x)$.
For fixed $x$, the projection operator
$P_{\setS,\matM^{-1}(x)}(\cdot)$ is Lipschitz continuous in its argument
under Assumption~\ref{ass:metric_bounds}.
Consequently, $\Phi(\cdot)$ is locally Lipschitz on $\setS$,
and uniqueness follows from the Picard--Lindelöf theorem.
\end{proof}

\subsection{Equilibrium points and variational inequalities}

We now show that the equilibria of \eqref{eq:SD_dynamics}
coincide with solutions of the variational inequality
$\mathrm{VI}(F,\setS)$.

\begin{proposition}[Equilibrium--solution equivalence]\label{prop:equilibrium_VI}
A point $\bar x \in \setS$ is an equilibrium of
\eqref{eq:SD_dynamics} if and only if
$\bar x$ solves $\mathrm{VI}(F,\setS)$.
\end{proposition}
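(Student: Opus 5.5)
The plan is to reduce the fixed-point equation \eqref{eq:equilibrium} to the variational inequality \eqref{eq:VI} using the projection characterization \eqref{eq:proj_char}. The only subtlety is which metric is involved. The projection in \eqref{eq:SD_dynamics} is taken with respect to $\matM^{-1}(x)$, not $\matM(x)$. The lemma preceding \eqref{eq:proj_char} was proved for an arbitrary symmetric positive definite matrix, so I will apply it with $\matM(\bar x)$ replaced by $N:=\matM(\bar x)^{-1}$. This matrix is symmetric positive definite because $\matM(\bar x)$ is.

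First, $\bar x$ is an equilibrium of \eqref{eq:SD_dynamics} exactly when $\Phi(\bar x)=0$, since $\lambda>0$; this is precisely \eqref{eq:equilibrium}. Set $y:=\bar x-\alpha\matM(\bar x)F(\bar x)$ and $z:=\bar x\in\setS$. By \eqref{eq:proj_char} applied with the metric $N$, the identity $\bar x=P_{\setS,N}(y)$ holds if and only if
\[
(\bar x-y)^T N (w-\bar x)\ge 0\quad\forall w\in\setS .
\]

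Next, substitute $\bar x-y=\alpha\matM(\bar x)F(\bar x)$. Since $\matM(\bar x)$ is symmetric, $\bigl(\matM(\bar x)F(\bar x)\bigr)^T\matM(\bar x)^{-1}=F(\bar x)^T$, and the inequality becomes
\[
\alpha\,F(\bar x)^T(w-\bar x)\ge 0\quad\forall w\in\setS .
\]
Dividing by $\alpha>0$ gives exactly \eqref{eq:VI}. Every step in this chain is an equivalence, so both directions of the proposition follow at once. The hypothesis $\bar x\in\setS$ is used in two places: it is required to invoke \eqref{eq:proj_char} with $z=\bar x$, and it is part of the definition of a solution of $\mathrm{VI}(F,\setS)$.

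There is no real obstacle here; the step needing the most care is the metric cancellation. This cancellation is exactly why the scaling $\matM(x)F(x)$ is paired with the projection in the inverse metric $\matM^{-1}(x)$. I will also remark that the equilibrium set is independent of $\alpha$, $\lambda$ and the choice of $\matM(\cdot)$. Finally, I will note that Assumptions~\ref{ass:metric_bounds}--\ref{ass:metric_regular} are not needed for this pointwise statement: only the symmetry and positive definiteness of $\matM(\bar x)$ are used.
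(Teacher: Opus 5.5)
Your proposal is correct and follows the paper's proof: you write the equilibrium condition as the fixed-point equation \eqref{eq:equilibrium} and apply \eqref{eq:proj_char} with $y=\bar x-\alpha\matM(\bar x)F(\bar x)$ to reach $\alpha F(\bar x)^T(w-\bar x)\ge 0$ for all $w\in\setS$. You also make explicit two steps the paper leaves implicit: \eqref{eq:proj_char} is applied with the symmetric positive definite matrix $\matM(\bar x)^{-1}$, and symmetry of $\matM(\bar x)$ gives the cancellation $(\matM(\bar x)F(\bar x))^T\matM(\bar x)^{-1}=F(\bar x)^T$.
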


\begin{proof}
By definition, $\bar x$ is an equilibrium if and only if
\[
\bar x
=
P_{\setS,\matM^{-1}(\bar x)}
\bigl(\bar x - \alpha \matM(\bar x)F(\bar x)\bigr).
\]
Using the variational characterization
\eqref{eq:proj_char} with
$y = \bar x - \alpha \matM(\bar x)F(\bar x)$,
this condition is equivalent to
\[
\alpha F(\bar x)^T (x - \bar x) \ge 0,
\quad \forall x \in \setS,
\]
which coincides with \eqref{eq:VI}.
\end{proof}

\subsection{Residual mapping}

For later use in the stability analysis, we define the
state-dependent projection residual
\begin{equation}\label{eq:residual}
r(x)
:=
\frac{1}{\alpha}
\bigl(
x -
P_{\setS,\matM^{-1}(x)}
(x - \alpha \matM(x)F(x))
\bigr).
\end{equation}
By Proposition~\ref{prop:equilibrium_VI}, $r(x) = 0$
if and only if $x$ is a solution of $\mathrm{VI}(F,\setS)$.

Having established that equilibria of \eqref{eq:SD_dynamics}  coincide with VI solutions, we now address whether trajectories approach these equilibria. This requires analyzing the stability of the state-dependent projection neural network.

\section{Stability I: Lyapunov stability}\label{sec:stabI}

In this section, we establish Lyapunov stability of equilibria of the
state-dependent projection neural network \eqref{eq:SD_dynamics}.
We deliberately keep the result structural and local, as the precise
constants and sharper convergence claims will be revisited after the
final form of the assumptions on $\matM(\cdot)$ is settled.

\subsection{A nonexpansive fixed-point viewpoint}

Define the state-dependent projected map
\begin{equation}\label{eq:Tmap}
T(x)
:=
P_{\setS,\matM^{-1}(x)}\bigl(x-\alpha \matM(x)F(x)\bigr),
\qquad x \in \setS.
\end{equation}
The map $T$ can be viewed as a single projected forward step with respect to the state-dependent metric: we first move along the direction $-M(x)F(x)$ and then project back onto $S$ using $M(x)^{-1}$. Fixed points of $T$ are exactly the equilibria of the continuous-time system and, by Proposition~\ref{prop:equilibrium_VI}, the solutions of $\mathrm{VI}(F,S)$. Therefore, we can write \eqref{eq:SD_dynamics} as
\begin{equation}\label{eq:SD_dynamics_T}
\frac{dx}{dt} = \lambda\bigl(T(x)-x\bigr).
\end{equation}
By Proposition~\ref{prop:equilibrium_VI}, a point $\bar{x} \in \setS$
is an equilibrium of \eqref{eq:SD_dynamics} if and only if it is a
fixed point of $T$, i.e., $T(\bar{x})=\bar{x}$, equivalently
$\bar{x}$ solves $\mathrm{VI}(F,\setS)$.  The stability proof below relies on a local nonexpansiveness property
of the map $T$ around a fixed point. This isolates the essential
difficulty introduced by the state dependence of the metric:
the map $x \mapsto T(x)$ involves both the projection argument and the
projection metric changing with $x$.

\begin{assumption}[Local nonexpansiveness of $T$]\label{ass:T_nonexp}
Let $\bar{x} \in \setS$ satisfy $T(\bar{x})=\bar{x}$.
There exists a neighborhood $U \subset \setS$ of $\bar{x}$ such that
\begin{equation}\label{eq:T_nonexp}
\|T(x)-T(y)\| \le \|x-y\|,
\qquad \forall x,y \in U.
\end{equation}
\end{assumption}

\begin{remark}\label{rem:T_nonexp_discussion}
Assumption~\ref{ass:T_nonexp} holds automatically in the fixed-metric
case $\matM(\cdot)\equiv M$ when $F$ is monotone and $\alpha$ is chosen
so that the (Euclidean) forward step $x\mapsto x-\alpha M F(x)$ is
nonexpansive; it is also satisfied in when $\alpha$ is chosen conservatively relative to the Lipschitz constants of $F$ and $\matM$ (see Lemma \ref{lem:T_nonexp_sufficient}), and when the metric varies slowly relative to the trajectory velocity. 
% We postpone explicit sufficient conditions (in terms of Lipschitz constants of $F$ and $\matM$) to the refined stability section.
\end{remark}

\subsection{Lyapunov stability of equilibria}
Intuitively, if the state-dependent fixed-point map $T$ does not expand distances near a solution, then trajectories of the continuous-time dynamics should not move away from that solution. The next theorem formalizes this idea using a simple quadratic Lyapunov function.

\begin{theorem}[Lyapunov stability under local nonexpansiveness]\label{thm:Lyap_stability_I}
Assume that \eqref{eq:SD_dynamics} admits an equilibrium point
$\bar{x} \in \setS$ (equivalently, a solution of $\mathrm{VI}(F,\setS)$),
and that Assumption~\ref{ass:T_nonexp} holds in a neighborhood
$U \subset \setS$ of $\bar{x}$. Consider the Lyapunov function
\begin{equation}\label{eq:V_euclid}
V(x) := \frac{1}{2}\|x-\bar{x}\|^2.
\end{equation}
Then, for every trajectory $x(\cdot)$ of \eqref{eq:SD_dynamics_T}
with $x(0)\in U$, the function $t\mapsto V(x(t))$ is nonincreasing as
long as $x(t)\in U$. In particular, $\bar{x}$ is a Lyapunov stable
equilibrium of \eqref{eq:SD_dynamics}.
\end{theorem}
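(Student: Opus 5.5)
Differentiate $V$ along trajectories and compare the projected point $T(x)$ with the fixed point through Assumption~\ref{ass:T_nonexp}. Let $x(\cdot)$ be a trajectory of \eqref{eq:SD_dynamics_T} with $x(0)\in U$, and consider an interval $[0,\tau)$ on which $x(t)\in U$. Since $x(\cdot)$ is $C^1$ (the vector field $\Phi$ is continuous by Lemma~\ref{lem:Phi_cont}), the chain rule gives
\[
\frac{d}{dt}V(x(t)) = \inner{x(t)-\bar x}{\dot x(t)} = \lambda\inner{x-\bar x}{T(x)-x}.
\]
Using $T(\bar x)=\bar x$, split $T(x)-x = (T(x)-T(\bar x)) - (x-\bar x)$. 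This yields
\[
\frac{d}{dt}V(x(t)) = \lambda\Bigl(\inner{x-\bar x}{T(x)-T(\bar x)} - \|x-\bar x\|^2\Bigr).
\]

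Next apply Cauchy--Schwarz and then Assumption~\ref{ass:T_nonexp} with $y=\bar x\in U$:
\[
\inner{x-\bar x}{T(x)-T(\bar x)} \le \|x-\bar x\|\,\|T(x)-T(\bar x)\| \le \|x-\bar x\|^2 .
\]
Hence $\frac{d}{dt}V(x(t))\le 0$ whenever $x(t)\in U$, so $V(x(t))$ is nonincreasing there. Note that no differentiation of the metric $\matM(x(t))$ is needed. The Lyapunov function is Euclidean, and all state dependence is absorbed into the nonexpansiveness of $T$. One should also check that trajectories stay in $\setS$, or at least in the domain of $T$. I would take this from the setup, since the vector field is defined on $\setS$; if needed, it follows because $T(x)\in\setS$ and $\setS$ is convex, which makes $\setS$ forward invariant.

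For Lyapunov stability, fix $\varepsilon>0$ and pick $r\in(0,\varepsilon]$ such that the relative ball $B(\bar x,r)\cap\setS\subset U$. Set $\delta=r/2$ and take $x(0)\in\setS$ with $\|x(0)-\bar x\|<\delta$. Let $\tau$ be the first time $\|x(t)-\bar x\|\ge r$, and suppose for contradiction that $\tau<\infty$. On $[0,\tau)$ the trajectory lies in $U$, so $V$ is nonincreasing there. Continuity then gives $\|x(\tau)-\bar x\|\le\|x(0)-\bar x\|<r/2$, which is a contradiction. Hence the trajectory stays in $B(\bar x,r)$ for all time. Boundedness also rules out finite-time blow-up, so the local solutions from Lemma~\ref{lem:existence} extend to $[0,\infty)$.

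The main obstacle is this last invariance step. The hypothesis only controls $V$ while $x(t)\in U$, so the argument must show the trajectory never leaves $U$. The first-exit-time argument above handles this, provided the shrunken ball is chosen inside $U$ (relative to $\setS$).
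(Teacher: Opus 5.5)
Your proposal is correct and follows the paper's proof essentially line by line: you use the same derivative $\lambda\langle x-\bar x, T(x)-x\rangle$, the same splitting via $T(\bar x)=\bar x$, and the same Cauchy--Schwarz plus nonexpansiveness bound. The only difference is that you write out the first-exit-time argument, together with forward invariance of $\setS$ and global existence, which the paper compresses into an appeal to ``the standard Lyapunov stability criterion applied to $V$.''
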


\begin{proof}
Let $\bar{x}$ be a fixed point of $T$, i.e., $T(\bar{x})=\bar{x}$.
Along any differentiable trajectory of \eqref{eq:SD_dynamics_T},
\[
\frac{dV(x(t))}{dt}
= \inner{x(t)-\bar{x}}{\frac{dx}{dt}(t)}
= \lambda\,\inner{x-\bar{x}}{T(x)-x},
\]
where we suppress the time variable for readability.
Rewrite
\[
\inner{x-\bar{x}}{T(x)-x}
=
\inner{x-\bar{x}}{T(x)-T(\bar{x})} - \|x-\bar{x}\|^2.
\]
By Cauchy--Schwarz and Assumption~\ref{ass:T_nonexp},
for all $x\in U$,
\[
\inner{x-\bar{x}}{T(x)-T(\bar{x})}
\le
\|x-\bar{x}\|\,\|T(x)-T(\bar{x})\|
\le
\|x-\bar{x}\|^2.
\]
Hence, for all $x\in U$,
\[
\inner{x-\bar{x}}{T(x)-x} \le 0
\quad \Longrightarrow \quad
\frac{dV(x(t))}{dt}\le 0
\quad \text{whenever } x(t)\in U.
\]
Therefore $V(x(t))$ is nonincreasing as long as the trajectory remains
in $U$. The Lyapunov stability of $\bar{x}$ follows from the standard
Lyapunov stability criterion applied to $V$.
\end{proof}

\begin{remark}\label{rem:Lyap_I_scope}
Theorem~\ref{thm:Lyap_stability_I} is intentionally local and
qualitative. It reduces Lyapunov stability to verifying a
nonexpansiveness property of the state-dependent fixed-point map $T$.
In subsequent sections we will provide explicit sufficient conditions
for \eqref{eq:T_nonexp} in terms of Lipschitz properties of $F$ and
$\matM(\cdot)$, and we will study stronger notions such as asymptotic
and exponential stability under strengthened monotonicity assumptions.
\end{remark}

\begin{remark}
The Lyapunov function $V(x)=\frac12\|x-\bar{x}\|^2$ is chosen for
structural clarity. Since $\matM(\bar{x})$ is symmetric positive
definite, the equivalent quadratic form
$\frac12\|x-\bar{x}\|_{\matM(\bar{x})}^2$ could be used instead without
affecting the stability conclusions.
\end{remark}

\subsection{Sufficient conditions for local nonexpansiveness}

We now provide explicit conditions under which the
state-dependent fixed-point map $T$ defined in \eqref{eq:Tmap}
is locally nonexpansive around a solution of
$\mathrm{VI}(F,\setS)$.

\begin{assumption}[Regularity of $F$]\label{ass:F_Lipschitz}
The mapping $F:\R^n\to\R^n$ is Lipschitz continuous on $\setS$
with constant $K>0$, i.e.,
\[
\|F(x)-F(y)\| \le K\|x-y\|,
\qquad \forall x,y\in\setS.
\]
\end{assumption}

\begin{assumption}[Metric Lipschitz continuity]\label{ass:M_Lipschitz}
The metric mapping $\matM:\R^n\to\R^{n\times n}$ is Lipschitz
continuous on $\setS$ with constant $K_M>0$:
\[
\|\matM(x)-\matM(y)\| \le K_M\|x-y\|,
\qquad \forall x,y\in\setS.
\]
\end{assumption}

\begin{lemma}[Local Lipschitz continuity of the forward step]
\label{lem:forward_Lipschitz}
Under Assumptions~\ref{ass:metric_bounds}, \ref{ass:F_Lipschitz}, and
\ref{ass:M_Lipschitz}, 
the mapping
\[
G(x) := x-\alpha\matM(x)F(x)
\]
is locally Lipschitz on $\setS$, with
\[
\|G(x)-G(y)\|
\le
\bigl(1+\alpha L K + \alpha K_M \|F(\bar{x})\|\bigr)\|x-y\|
+ o(\|x-y\|)
\]
in a neighborhood of any fixed point $\bar{x}$ of $T$.
\end{lemma}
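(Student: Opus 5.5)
The plan is to split the forward step by the product rule and isolate the piece involving $F(\bar x)$. For $x,y\in\setS$ near a fixed point $\bar x$ of $T$, write
\[
G(x)-G(y) = (x-y) - \alpha\bigl(\matM(x)F(x)-\matM(y)F(y)\bigr),
\]
and decompose the product difference as
\[
\matM(x)F(x)-\matM(y)F(y)
= \matM(x)\bigl(F(x)-F(y)\bigr) + \bigl(\matM(x)-\matM(y)\bigr)F(y).
\]
The first term is bounded by Assumption~\ref{ass:metric_bounds} and Assumption~\ref{ass:F_Lipschitz}: $\|\matM(x)(F(x)-F(y))\|\le LK\|x-y\|$.

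For the second term, I will insert $F(\bar x)$ by writing $F(y)=F(\bar x)+(F(y)-F(\bar x))$. This gives
\[
\|(\matM(x)-\matM(y))F(y)\|
\le K_M\|F(\bar x)\|\,\|x-y\| + K_M K\,\|y-\bar x\|\,\|x-y\|,
\]
using Assumption~\ref{ass:M_Lipschitz} and then Assumption~\ref{ass:F_Lipschitz} once more. Adding the three pieces with the triangle inequality yields
\[
\|G(x)-G(y)\| \le \bigl(1+\alpha LK+\alpha K_M\|F(\bar x)\|\bigr)\|x-y\| + \alpha K_M K\,\|y-\bar x\|\,\|x-y\|.
\]
The constant in front of $\|x-y\|$ is finite on all of $\setS$, so this already shows that $G$ is locally Lipschitz (indeed Lipschitz on bounded subsets of $\setS$).

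The step I expect to be the main obstacle is interpreting the remainder $\alpha K_MK\|y-\bar x\|\|x-y\|$ as $o(\|x-y\|)$. It is $o(\|x-y\|)$ in the sense that its ratio to $\|x-y\|$, namely $\alpha K_M K\|y-\bar x\|$, tends to zero as $x,y\to\bar x$; it is \emph{not} small merely because $x$ and $y$ are close to each other. To get the uniform form I would symmetrize, replacing $\|y-\bar x\|$ by $\max\{\|x-\bar x\|,\|y-\bar x\|\}$, and restrict to a ball $B_\rho(\bar x)\cap\setS$. On that ball the remainder is at most $\alpha K_MK\rho\|x-y\|$, so for every $\varepsilon>0$ one can choose $\rho$ small enough that $\|G(x)-G(y)\|\le(1+\alpha LK+\alpha K_M\|F(\bar x)\|+\varepsilon)\|x-y\|$ there. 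This is the precise meaning I would attach to the $o(\|x-y\|)$ term in the statement.
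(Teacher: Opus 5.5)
Your proof is correct and is essentially the paper's: the same split $\matM(x)(F(x)-F(y))+(\matM(x)-\matM(y))F(y)$, bounded by $LK\|x-y\|$ and $K_M\|F(y)\|\,\|x-y\|$. The only difference is that the paper just uses continuity, $\|F(y)\|=\|F(\bar{x})\|+o(1)$ as $y\to\bar{x}$, whereas you use Lipschitz continuity of $F$ to make the remainder explicit as $\alpha K_M K\|y-\bar{x}\|\,\|x-y\|$, which also correctly fixes the meaning of the $o(\|x-y\|)$ term (small relative to $\|x-y\|$ as $x,y\to\bar{x}$).
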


\begin{proof}
For $x,y\in\setS$,
\[
G(x)-G(y)
=
(x-y)
-\alpha\bigl(\matM(x)F(x)-\matM(y)F(y)\bigr).
\]
Add and subtract $\matM(x)F(y)$ to obtain
\[
\matM(x)F(x)-\matM(y)F(y)
=
\matM(x)(F(x)-F(y))
+
(\matM(x)-\matM(y))F(y).
\]
Using Assumptions~\ref{ass:F_Lipschitz},
\ref{ass:M_Lipschitz}, and \ref{ass:metric_bounds},
\[
\|\matM(x)(F(x)-F(y))\|
\le L K \|x-y\|,
\]
and
\[
\|(\matM(x)-\matM(y))F(y)\|
\le K_M \|F(y)\| \|x-y\|.
\]
Since $F$ is continuous and $\bar{x}$ is fixed,
$\|F(y)\| = \|F(\bar{x})\| + o(1)$ as $y\to \bar{x}$.
Combining the above estimates yields the claim.
\end{proof}

\begin{lemma}[Continuity of the metric projection with respect to the metric]
\label{lem:metric_projection_continuity}
Let $S\subset\mathbb{R}^n$ be a nonempty closed convex set, and let
$\mathcal Q\subset\mathbb{S}_{++}^n$ be a compact set of symmetric positive
definite matrices. Then the mapping
\[
(Q,z)\mapsto P_{S,Q}(z)
\]
is locally Lipschitz continuous on $\mathcal Q\times\mathbb{R}^n$.
In particular, for every compact set $K\subset\mathbb{R}^n$, there exists
a constant $C>0$ such that
\[
\|P_{S,Q_1}(z)-P_{S,Q_2}(z)\|
\le C\|Q_1-Q_2\|,
\quad
\forall z\in K,\ \forall Q_1,Q_2\in\mathcal Q.
\]
\end{lemma}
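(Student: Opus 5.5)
I will argue directly from the variational characterization \eqref{eq:proj_char}, which holds for any symmetric positive definite matrix $Q$ in place of $\matM(x)$ (the proof of the first lemma used nothing else). Compactness of $\mathcal Q$ gives uniform constants $0<m\le \mu$ with $m\|v\|^2\le v^TQv$ and $\|Q\|\le\mu$ for all $Q\in\mathcal Q$. This plays the role that Assumption~\ref{ass:metric_bounds} plays elsewhere in the paper.

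\emph{Step 1: Lipschitz dependence on $z$ for fixed $Q$.} Let $p_i=P_{S,Q}(z_i)$. Testing \eqref{eq:proj_char} at $z_1$ with $w=p_2$, and at $z_2$ with $w=p_1$, and adding the two inequalities gives
\[
(p_1-p_2)^TQ(p_1-p_2)\le (z_1-z_2)^TQ(p_1-p_2).
\]
Hence $\|p_1-p_2\|_Q\le\|z_1-z_2\|_Q$, and therefore $\|p_1-p_2\|\le \sqrt{\mu/m}\,\|z_1-z_2\|$ uniformly in $Q\in\mathcal Q$.

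\emph{Step 2: dependence on $Q$ for fixed $z$.} Let $p_i=P_{S,Q_i}(z)$. Testing the $Q_1$-characterization with $w=p_2$ and the $Q_2$-characterization with $w=p_1$ gives
\[
(p_1-z)^TQ_1(p_2-p_1)\ge0,\qquad (p_2-z)^TQ_2(p_1-p_2)\ge0.
\]
Adding them, and writing $(p_2-z)^TQ_2=(p_2-z)^TQ_1+(p_2-z)^T(Q_2-Q_1)$, I expect to obtain
\[
(p_1-p_2)^TQ_1(p_1-p_2)\le (p_2-z)^T(Q_2-Q_1)(p_1-p_2).
\]
It follows that $m\|p_1-p_2\|\le \|Q_1-Q_2\|\,\|p_2-z\|$. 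I would double-check the sign bookkeeping in this addition, since it is the only delicate algebra in the argument.

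\emph{Step 3: the main obstacle, a uniform bound on $\|P_{S,Q}(z)-z\|$.} To bound $\|p_2-z\|$ uniformly for $z\in K$ and $Q\in\mathcal Q$, fix any $s_0\in S$. Minimality of the projection gives $\|P_{S,Q}(z)-z\|_Q\le\|s_0-z\|_Q$, so
\[
\|P_{S,Q}(z)-z\|\le\sqrt{\mu/m}\,\bigl(\|s_0\|+\sup_{z\in K}\|z\|\bigr)=:R_K.
\]
This yields the displayed estimate with $C=R_K/m$.

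\emph{Step 4: joint local Lipschitz continuity.} Combine the two estimates through the triangle inequality
\[
\|P_{S,Q_1}(z_1)-P_{S,Q_2}(z_2)\|\le\|P_{S,Q_1}(z_1)-P_{S,Q_1}(z_2)\|+\|P_{S,Q_1}(z_2)-P_{S,Q_2}(z_2)\|,
\]
applying Step~1 to the first term and Step~2 (with Step~3 on a compact neighborhood of $z_2$) to the second. This gives local Lipschitz continuity on $\mathcal Q\times\R^n$. In fact the constant is uniform on $\mathcal Q\times K$ for each compact $K$.
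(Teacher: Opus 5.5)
Your argument is correct and is essentially the paper's. The paper writes the optimality conditions as $Q_i(y_i-z)+\xi_i=0$ with $\xi_i\in N_S(y_i)$ and invokes monotonicity of the normal cone. That is exactly your step of testing each variational characterization at the other projection and adding, and it arrives at the same key inequality $(p_1-p_2)^TQ_1(p_1-p_2)\le (p_2-z)^T(Q_2-Q_1)(p_1-p_2)$; the sign bookkeeping in your Step~2 checks out. Your Steps~1, 3 and~4 also supply what the paper only asserts: the $Q$-uniform Lipschitz bound in $z$, an explicit uniform bound on $\|P_{S,Q}(z)-z\|$ via a fixed $s_0\in S$, and the triangle-inequality combination that gives joint local Lipschitz continuity.
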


\begin{proof}
Fix $z\in\mathbb{R}^n$ and $Q\in\mathcal Q$. The metric projection
$P_{S,Q}(z)$ is the unique solution of the strongly convex optimization
problem
\[
\min_{y\in S}\ \frac12 (y-z)^\top Q (y-z).
\]
Since $\mathcal Q$ is compact, there exist constants $0<m\le M<\infty$
such that
\[
m I \preceq Q \preceq M I
\quad \forall Q\in\mathcal Q.
\]
(The notation $A\preceq B$ means that $B-A$ is symmetric positive semidefinite.) Hence, the objective function is uniformly strongly convex with modulus $m$
and has Lipschitz continuous gradient with Lipschitz constant $M$. Let $Q_1,Q_2\in\mathcal Q$, and denote $y_i:=P_{S,Q_i}(z)$ for $i=1,2$.
By the first-order optimality conditions, there exist
$\xi_i\in N_S(y_i)$ such that
\[
Q_i(y_i-z)+\xi_i=0,\quad i=1,2,
\]
where $N_S(\cdot)$ denotes the normal cone to $S$.
Subtracting the two conditions and taking the inner product with
$y_1-y_2$ yields
\[
\langle Q_1(y_1-y_2),y_1-y_2\rangle
=
\langle (Q_2-Q_1)(y_2-z),y_1-y_2\rangle
-\langle \xi_1-\xi_2,y_1-y_2\rangle.
\]
By monotonicity of the normal cone mapping, the last term is nonnegative.
Using uniform positive definiteness of $Q_1$ and Cauchy--Schwarz inequality,
we obtain
\[
m\|y_1-y_2\|^2
\le
\|Q_1-Q_2\|\,\|y_2-z\|\,\|y_1-y_2\|.
\]
Since $z$ is restricted to a compact set and projections onto $S$ are
bounded on compact sets, $\|y_2-z\|$ is uniformly bounded. Dividing both
sides by $\|y_1-y_2\|$ (when nonzero) yields
\[
\|y_1-y_2\|\le C\|Q_1-Q_2\|
\]
for some constant $C>0$ independent of $Q_1,Q_2$. This proves the claimed
local Lipschitz continuity with respect to $Q$.
\end{proof}

\begin{lemma}[Local nonexpansiveness of $T$]
\label{lem:T_nonexp_sufficient}
Let $\bar{x}\in\setS$ be a solution of $\mathrm{VI}(F,\setS)$.
Suppose Assumptions~\ref{ass:metric_bounds},
\ref{ass:F_Lipschitz}, and \ref{ass:M_Lipschitz} hold.
If $\alpha>0$ satisfies
\begin{equation}\label{eq:stepsize_condition}
\alpha\bigl(LK + K_M\|F(\bar{x})\|\bigr) < 1,
\end{equation}
then there exists a neighborhood $U$ of $\bar{x}$ such that
\[
\|T(x)-T(y)\| \le \|x-y\|,
\qquad \forall x,y\in U.
\]
\end{lemma}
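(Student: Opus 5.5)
The plan is to split $T(x)-T(y)$ into a part where only the projection argument changes and a part where only the projection metric changes:
\[
T(x)-T(y)=\bigl[P_{\setS,\matM^{-1}(x)}(G(x))-P_{\setS,\matM^{-1}(x)}(G(y))\bigr]+\bigl[P_{\setS,\matM^{-1}(x)}(G(y))-P_{\setS,\matM^{-1}(y)}(G(y))\bigr],
\]
with $G(x)=x-\alpha\matM(x)F(x)$ as in Lemma~\ref{lem:forward_Lipschitz}. Each bracket is then estimated separately, and the step-size condition \eqref{eq:stepsize_condition} is used to close the estimate for $x,y$ in a small ball $U=B(\bar x,\rho)\cap\setS$.

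\emph{First bracket.} A projection in the metric $Q=\matM^{-1}(x)$ is nonexpansive in $\|\cdot\|_Q$, not in the Euclidean norm. Converting norms with Assumption~\ref{ass:metric_bounds} costs a factor $\sqrt{\kappa(Q)}$, where $\kappa(Q)\le L^2$ is the condition number of $Q$. This factor cannot be absorbed by \eqref{eq:stepsize_condition}, which only controls the $\alpha$-dependent part of the Lipschitz constant of $G$. I would therefore argue more carefully near $\bar x$. Write $Q_0=\matM^{-1}(\bar x)$ and work in the norm $\|\cdot\|_{Q_0}$, in which $P_{\setS,Q_0}$ is exactly nonexpansive. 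The discrepancy between $P_{\setS,Q}$ and $P_{\setS,Q_0}$ is $O(\|Q-Q_0\|)=O(\rho)$ by Lemma~\ref{lem:metric_projection_continuity}, applied on the compact set $\{\matM^{-1}(x):x\in\overline U\}$. This set is compact by continuity of $\matM^{-1}$, which in turn follows from Assumptions~\ref{ass:metric_bounds} and \ref{ass:M_Lipschitz}. I expect this to be the main obstacle. Lemma~\ref{lem:metric_projection_continuity} gives continuity in $Q$ for a fixed argument $z$, but it does not say that $z\mapsto P_{\setS,Q}(z)-P_{\setS,Q_0}(z)$ has a small Lipschitz constant. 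As a fallback I would impose a mild conditioning requirement on $\matM(\bar x)$, or assume that $\matM(\bar x)$ is close to a multiple of the identity on the relevant neighbourhood.

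\emph{Second bracket.} This part is cleaner. Lemma~\ref{lem:metric_projection_continuity} together with Assumption~\ref{ass:M_Lipschitz} and the identity $\matM^{-1}(x)-\matM^{-1}(y)=\matM^{-1}(x)(\matM(y)-\matM(x))\matM^{-1}(y)$ gives
\[
\|P_{\setS,\matM^{-1}(x)}(G(y))-P_{\setS,\matM^{-1}(y)}(G(y))\|\le C L^2K_M\|x-y\|.
\]
Here $C$ is proportional to $\|P_{\setS,\matM^{-1}(y)}(G(y))-G(y)\|$. At $\bar x$ this quantity equals $\alpha\|\matM(\bar x)F(\bar x)\|\le \alpha L\|F(\bar x)\|$, and by continuity it stays near that value on $U$. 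So the second bracket contributes a term of order $\alpha K_M\|F(\bar x)\|$, up to metric constants, plus $o(1)$ as $\rho\to0$.

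\emph{Conclusion and where the argument is thin.} Combining the two brackets with Lemma~\ref{lem:forward_Lipschitz} should give
\[
\|T(x)-T(y)\|\le\bigl(\theta+\alpha(LK+K_M\|F(\bar x)\|)\,c+o(1)\bigr)\|x-y\|.
\]
For the lemma to follow, this constant must be at most $1$ once $\rho$ is small enough. If the constant $c$ and the leading term $\theta$ can be arranged as $\theta=1-\alpha(\cdots)$ plus small terms, then \eqref{eq:stepsize_condition} forces the total below $1$. The natural way to get a leading contraction of that form is to use monotonicity of $F$ in the forward step, as in the fixed-metric case of Remark~\ref{rem:T_nonexp_discussion}. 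I would look there first. I expect the identity part of $G$, which contributes a Lipschitz constant $1$, to make the bound marginal. The precise bookkeeping of the constants is therefore the crux of the proof, and I am not sure it goes through as stated.
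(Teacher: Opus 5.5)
Your proposal stops short of a proof. The leading constant $\theta$ in your final estimate is never shown to be at most $1$, and you say yourself you are not sure it can be. It cannot, and the obstacle is not bookkeeping: the lemma is false as stated.

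\textbf{Counterexample with a monotone operator and the identity metric.} Take $\setS=\R^2$ (or any box with $0$ in its interior), $\matM\equiv I$, and $F(x)=Jx$ with $J=\bigl(\begin{smallmatrix}0&1\\-1&0\end{smallmatrix}\bigr)$.
\begin{itemize}
\item Then $\bar x=0$, $F$ is $1$-Lipschitz and even monotone, and Assumptions~\ref{ass:metric_bounds} and~\ref{ass:M_Lipschitz} hold for any $L>1$ and $K_M>0$.
\item Since $F(\bar x)=0$, condition \eqref{eq:stepsize_condition} reduces to $\alpha L<1$.
\item Yet $T(x)=(I-\alpha J)x$ and $\|(I-\alpha J)v\|=\sqrt{1+\alpha^2}\,\|v\|$ for every $v$, so $T$ expands distances on every neighbourhood of $\bar x$.
\end{itemize}
The lemma does not even assume monotonicity: with $F(x)=-x$ in one dimension, $T(x)=(1+\alpha)x$. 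Your two fallbacks do not rescue the statement, even combined, because this example is monotone and its metric is exactly the identity.

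\textbf{Counterexample with strong monotonicity and a non-scalar metric.} Strong monotonicity is not enough once $\matM\not\propto I$. Take $\setS=\R^2$, $\matM\equiv\diag(1,10^{-2})$, and $F(x)=Ax$ with $A=\bigl(\begin{smallmatrix}1&1/2\\1/2&1\end{smallmatrix}\bigr)$, so that $\mu=1/2$, $K=3/2$, $L=100$.
\begin{itemize}
\item Here $T(x)=(I-\alpha\matM A)x$, and $v=(1,-10)^{T}$ gives $\inner{v}{\matM Av}=-3.05$.
\item Hence $\|T(v)-T(0)\|^2=\|v\|^2+6.1\,\alpha+\alpha^2\|\matM Av\|^2>\|v\|^2$ for every $\alpha>0$.
\item By linearity, $T$ is expansive on every neighbourhood of $0$.
\end{itemize}
So both of your worries are real. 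Without strong monotonicity, the identity part of $G$ leaves no room below $1$. With a non-scalar metric, the Euclidean norm is the wrong norm in which to look for nonexpansiveness.

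\textbf{Where the paper's proof fails.} The paper uses exactly your decomposition and breaks at exactly the two points you flagged.
\begin{itemize}
\item It bounds the first bracket by $L^2\|G(x)-G(y)\|$. By Lemma~\ref{lem:forward_Lipschitz} this gives the factor $L^2(1+\alpha LK+\alpha K_M\|F(\bar x)\|)$, which exceeds $1$ for every $\alpha>0$ because $L>1$.
\item It then asserts $\Delta_M=o(\|x-y\|)$. However, Lemma~\ref{lem:metric_projection_continuity} with $z=G(y)$ only yields $\Delta_M\le L^3K_M\|T(y)-G(y)\|\,\|x-y\|$, and $\|T(y)-G(y)\|\to\alpha\|\matM(\bar x)F(\bar x)\|\neq0$ whenever $F(\bar x)\neq0$. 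This is exactly your observation about the second bracket.
\end{itemize}
The concluding appeal to the step-size condition therefore does not follow.

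\textbf{What can be proved instead.} One can prove a strict local contraction in the fixed norm $\|\cdot\|_{\matM^{-1}(\bar x)}$ under Assumption~\ref{ass:strong_monotone}. This is also what a Lyapunov argument with $V=\tfrac12\|x-\bar x\|^2_{\matM^{-1}(\bar x)}$ needs. It requires a different split from yours.

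By \eqref{eq:proj_char}, $z=T(x)$ is characterized by $\inner{\matM^{-1}(x)(z-x)+\alpha F(x)}{w-z}\ge0$ for all $w\in\setS$. The factor $\matM(x)$ in the argument cancels against the metric $\matM^{-1}(x)$. Take $w=T(y)$ in the condition for $T(x)$ and $w=T(x)$ in the condition for $T(y)$, and add:
\[
\begin{aligned}
\|T(x)-T(y)\|_{\matM^{-1}(x)}^2
&\le\inner{(x-y)-\alpha\matM(x)\bigl(F(x)-F(y)\bigr)}{T(x)-T(y)}_{\matM^{-1}(x)}\\
&\quad+\inner{\bigl(\matM^{-1}(y)-\matM^{-1}(x)\bigr)\bigl(T(y)-y\bigr)}{T(x)-T(y)}.
\end{aligned}
\]
\begin{itemize}
\item The metric variation now enters only through $T(y)-y\to0$, so it genuinely is $o(\|x-y\|)$.
\item For $\alpha<2\mu/(LK^2)$, the first term is at most $\sqrt{1-\alpha(2\mu-\alpha LK^2)/L}\;\|x-y\|_{\matM^{-1}(x)}\|T(x)-T(y)\|_{\matM^{-1}(x)}$.
\item Switching from $\matM^{-1}(x)$ to $\matM^{-1}(\bar x)$ costs only a factor $1+O(\|x-\bar x\|)$.
\end{itemize}
The resulting step-size condition is not \eqref{eq:stepsize_condition}, and the $K_M\|F(\bar x)\|$ term disappears.
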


\begin{proof}
Fix $\bar{x}\in\setS$ with $T(\bar{x})=\bar{x}$.
For $x,y$ sufficiently close to $\bar{x}$, write
\[
T(x)-T(y)
=
P_{\setS,\matM^{-1}(x)}(G(x))
-
P_{\setS,\matM^{-1}(y)}(G(y)).
\]
Add and subtract $P_{\setS,\matM^{-1}(x)}(G(y))$:
\[
\|T(x)-T(y)\|
\le
\|P_{\setS,\matM^{-1}(x)}(G(x))
-
P_{\setS,\matM^{-1}(x)}(G(y))\|
+
\Delta_M,
\]
where
\[
\Delta_M :=
\|P_{\setS,\matM^{-1}(x)}(G(y))
-
P_{\setS,\matM^{-1}(y)}(G(y))\|.
\]

The first term is bounded using Lipschitz continuity of the
projection with fixed metric (uniformly under
Assumption~\ref{ass:metric_bounds}):
\[
\|P_{\setS,\matM^{-1}(x)}(G(x))
-
P_{\setS,\matM^{-1}(x)}(G(y))\|
\le L^2\|G(x)-G(y)\|.
\]
By Lemma~\ref{lem:forward_Lipschitz},
this term is bounded by
\[
L^2\bigl(1+\alpha L K + \alpha K_M\|F(\bar{x})\|\bigr)\|x-y\|
+ o(\|x-y\|).
\]

The second term $\Delta_M$ arises from variation of the metric.
Under Assumption~\ref{ass:M_Lipschitz}, continuity of the projection
with respect to the metric implies
\[
\Delta_M = o(\|x-y\|)
\quad \text{as } x,y\to \bar{x}.
\]

Combining the estimates and using the step-size condition
\eqref{eq:stepsize_condition}, we conclude that
\[
\|T(x)-T(y)\| \le \|x-y\|
\]
for all $x,y$ in a sufficiently small neighborhood of $\bar{x}$.
\end{proof}

\begin{remark}\label{rem:stepsize_interpretation}
Condition \eqref{eq:stepsize_condition} has a natural interpretation:
the effective preconditioning strength $\alpha\matM(x)$ must be small
relative to both the Lipschitz variation of $F$ and the local
variation of the metric. In particular, when $\matM(\cdot)$ is constant,
the condition reduces to the classical small-step requirement
$\alpha L K < 1$.
\end{remark}

\section{Stability II: Asymptotic stability}\label{sec:stabII}

We now strengthen the Lyapunov stability result of the previous section
by establishing asymptotic convergence of trajectories to the solution
set of $\mathrm{VI}(F,\setS)$ under a monotonicity assumption on $F$.

\subsection{Monotonicity assumption}

\begin{assumption}[Monotonicity of $F$]\label{ass:monotone}
The mapping $F:\R^n\to\R^n$ is monotone on $\setS$, i.e.,
\[
\inner{F(x)-F(y)}{x-y} \ge 0,
\qquad \forall x,y \in \setS.
\]
\end{assumption}

This assumption is standard in the theory of variational inequalities
and ensures convexity of the solution set of $\mathrm{VI}(F,\setS)$.

\subsection{Decrease of the Lyapunov function}

Let $\bar{x} \in \setS$ be a solution of $\mathrm{VI}(F,\setS)$, and
consider the Lyapunov function
\[
V(x) := \frac{1}{2}\|x-\bar{x}\|^2.
\]
As shown in Theorem~\ref{thm:Lyap_stability_I}, under local
nonexpansiveness of $T$, $V$ is nonincreasing along trajectories of
\eqref{eq:SD_dynamics_T}.

The next lemma identifies the set where $\frac{dV}{dt}$ vanishes.

\begin{lemma}[Characterization of $\frac{dV}{dt} = 0$]\label{lem:Vdot_zero}
Suppose Assumption~\ref{ass:monotone} holds.
Along any trajectory $x(\cdot)$ of \eqref{eq:SD_dynamics_T},
\[
\frac{dV(x(t))}{dt} = 0
\quad \Longleftrightarrow \quad
T(x(t)) = x(t).
\]
\end{lemma}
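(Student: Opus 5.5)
The plan is to prove the two implications separately, using the derivative formula already computed in the proof of Theorem~\ref{thm:Lyap_stability_I}:
\[
\frac{dV(x(t))}{dt} = \lambda\,\inner{x-\bar{x}}{T(x)-x}.
\]
The direction ``$\Leftarrow$'' is immediate. If $T(x(t))=x(t)$, the right-hand side vanishes identically, so $\frac{dV}{dt}=0$.

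For ``$\Rightarrow$'' I would take the trajectory to lie in the neighbourhood $U$ of Assumption~\ref{ass:T_nonexp}. This is the same standing setting as the preceding discussion, where $V$ is already known to be nonincreasing. Since $T(\bar x)=\bar x$, I would split
\[
\inner{x-\bar{x}}{T(x)-x}
= \inner{x-\bar{x}}{T(x)-\bar{x}} - \|x-\bar{x}\|^2 .
\]
By Cauchy--Schwarz and then local nonexpansiveness,
\[
\inner{x-\bar{x}}{T(x)-\bar{x}} \le \|x-\bar{x}\|\,\|T(x)-\bar{x}\| \le \|x-\bar{x}\|^2 .
\]
Hence $\frac{dV}{dt}=0$ forces equality in both inequalities, and I would analyse this equality case. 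If $x=\bar{x}$, then $T(x)=T(\bar{x})=\bar{x}=x$ and we are done. Otherwise, equality in Cauchy--Schwarz with a nonnegative inner product gives $T(x)-\bar{x}=c\,(x-\bar{x})$ for some $c\ge 0$. Equality in the nonexpansiveness step gives $\|T(x)-\bar{x}\|=\|x-\bar{x}\|$, so $c=1$ and therefore $T(x)=x$.

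The main obstacle is the role of Assumption~\ref{ass:monotone}, because the lemma as stated invokes only monotonicity. A monotonicity-only route would start from the projection characterization \eqref{eq:proj_char} in the metric $\matM^{-1}(x)$, with $y=x-\alpha\matM(x)F(x)$, $z=T(x)$ and $w=\bar{x}$:
\[
(T(x)-x)^T\matM^{-1}(x)\,(\bar{x}-T(x)) + \alpha F(x)^T(\bar{x}-T(x)) \ge 0 .
\]
One would add to this the VI inequality $F(\bar{x})^T(T(x)-\bar{x})\ge 0$ and the monotonicity inequality $\inner{F(x)-F(\bar{x})}{x-\bar{x}}\ge 0$.

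The difficulty is that this route naturally yields a bound of the form $\inner{x-\bar{x}}{x-T(x)}_{\matM^{-1}(x)} \ge \|x-T(x)\|^2_{\matM^{-1}(x)} - \alpha\inner{F(x)}{x-T(x)}$. That bound is stated in the state-dependent metric rather than the Euclidean one in which $V$ is defined, and it carries a leftover cross term. Closing that gap would require further Lipschitz control of $F$ and $\matM$. For this reason I would rely on the equality-case argument above, which uses the nonexpansiveness hypothesis already in force for the Lyapunov function $V$. Monotonicity then enters only through the framework in which the lemma is later applied, namely the LaSalle-type argument of this section.
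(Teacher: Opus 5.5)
Your proof is correct and follows the paper's argument. The paper also starts from the derivative formula in Theorem~\ref{thm:Lyap_stability_I}, applies the Cauchy--Schwarz/nonexpansiveness bound, and reads off $T(x)=x$ from the equality case; like you, it implicitly needs $x(t)\in U$ and never uses monotonicity. Your handling of the equality case (treating $x=\bar{x}$ separately and forcing $c=1$) is more careful than the paper's one-line appeal to Cauchy--Schwarz.
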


\begin{proof}
We use the fact that $V$ can only stop decreasing along a trajectory if the map $T$ stops pulling $x$ toward the solution set, which by our equilibrium characterization means that $x$ is already a fixed point of $T$. From the proof of Theorem~\ref{thm:Lyap_stability_I},
\[
\frac{dV(x(t))}{dt}
=
\lambda\,\inner{x-\bar{x}}{T(x)-x}.
\]
We have already shown that
\[
\inner{x-\bar{x}}{T(x)-x} \le 0,
\]
with equality if and only if
\[
\inner{x-\bar{x}}{T(x)-T(\bar{x})} = \|x-\bar{x}\|^2.
\]
By Cauchy--Schwarz, equality occurs if and only if
\[
T(x)-T(\bar{x}) = x-\bar{x},
\]
and since $T(\bar{x})=\bar{x}$, this is equivalent to $T(x)=x$.

Finally, by Proposition~\ref{prop:equilibrium_VI}, $T(x)=x$ holds if and
only if $x$ solves $\mathrm{VI}(F,\setS)$.
\end{proof}

\subsection{Asymptotic convergence}

Lyapunov stability guarantees that trajectories do not diverge from a solution, but it does not by itself ensure convergence. Under a monotonicity assumption on $F$, we can strengthen the result by combining the Lyapunov function with LaSalle’s invariance principle to obtain asymptotic convergence.

\begin{theorem}[Asymptotic stability]\label{thm:asymptotic}
Suppose Assumptions~\ref{ass:metric_bounds},
\ref{ass:F_Lipschitz}, \ref{ass:M_Lipschitz},
and \ref{ass:monotone} hold, and let $\bar{x}$ be a solution of
$\mathrm{VI}(F,\setS)$.
Assume that the step-size condition \eqref{eq:stepsize_condition} holds
so that the map $T$ is locally nonexpansive around $\bar{x}$.
Then $\bar{x}$ is an asymptotically stable equilibrium of
\eqref{eq:SD_dynamics}.

Moreover, every trajectory $x(\cdot)$ of \eqref{eq:SD_dynamics}
starting sufficiently close to $\bar{x}$ converges to the solution set
of $\mathrm{VI}(F,\setS)$.
\end{theorem}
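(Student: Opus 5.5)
We combine the Lyapunov function $V(x)=\tfrac12\|x-\bar{x}\|^2$ with LaSalle's invariance principle, in three steps.

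\emph{Step 1 (nonexpansiveness and Lyapunov stability).} By Lemma~\ref{lem:T_nonexp_sufficient}, the step-size condition \eqref{eq:stepsize_condition} yields a neighborhood $U$ of $\bar{x}$ on which $T$ is nonexpansive. Hence Theorem~\ref{thm:Lyap_stability_I} applies, and $\bar{x}$ is Lyapunov stable.

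\emph{Step 2 (a compact forward-invariant set).} Choose $\rho>0$ with $\bar B:=\{x\in\setS:\|x-\bar{x}\|\le\rho\}\subset U$. The set $\bar B$ is compact, since it is the intersection of the closed set $\setS$ with a closed ball. Forward invariance of $\setS$ itself requires a short argument. Since $T(x)\in\setS$, the dynamics read
\[
\dot x=\lambda\bigl(T(x)-x\bigr).
\]
By variation of constants,
\[
x(t)=e^{-\lambda t}x(0)+\int_0^t \lambda e^{-\lambda(t-s)}\,T(x(s))\,ds,
\]
which is a limit of convex combinations of points of $\setS$. As $\setS$ is closed and convex, $x(t)\in\setS$. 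Because $V$ is nonincreasing while $x(t)\in U$, any trajectory starting in $\bar B$ stays in $\bar B$ and therefore exists for all $t\ge0$. Uniqueness follows from Lemma~\ref{lem:unique}, whose hypotheses are implied by Assumptions~\ref{ass:F_Lipschitz} and~\ref{ass:M_Lipschitz}; this makes the flow a genuine semi-dynamical system, as LaSalle's principle requires.

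\emph{Step 3 (LaSalle).} Apply LaSalle's invariance principle on $\bar B$. Every trajectory starting in $\bar B$ converges to the largest invariant subset of $E:=\{x\in\bar B:\ \inner{x-\bar{x}}{T(x)-x}=0\}$. By Lemma~\ref{lem:Vdot_zero}, together with the equality case of Cauchy--Schwarz combined with nonexpansiveness, $E$ consists of fixed points of $T$. By Proposition~\ref{prop:equilibrium_VI}, $E$ is therefore contained in the solution set of $\mathrm{VI}(F,\setS)$. This gives convergence to the solution set, which is the second claim.

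To upgrade this to convergence to a single point, observe that each solution $x^*\in E$ lies in the interior region where $T$ is nonexpansive, provided $\rho$ is taken small enough relative to $U$. Then $\|x(t)-x^*\|$ is nonincreasing, so an $\omega$-limit point $x^*$ forces $x(t)\to x^*$ (Opial-type argument). Asymptotic stability of $\bar{x}$ in the strict sense then holds when $\bar{x}$ is isolated in the solution set, for example a unique solution, as happens under strict monotonicity. Otherwise the correct conclusion is asymptotic stability of the solution set. I would state this qualification explicitly.

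\emph{Main obstacle.} The delicate point is the equality step in Lemma~\ref{lem:Vdot_zero}. Equality in Cauchy--Schwarz gives only $T(x)-\bar{x}=c\,(x-\bar{x})$ with $c\ge0$; combined with $\|T(x)-\bar{x}\|\le\|x-\bar{x}\|$ this yields $c=1$, hence $T(x)=x$. I would redo this argument carefully inside $U$. A second subtlety is that Step 3 needs nonexpansiveness around every nearby solution $x^*$, not only around $\bar{x}$. I would first try to obtain this by noting that \eqref{eq:stepsize_condition} is strict and the constants are continuous in $\|F(x^*)\|$, so a uniform neighborhood exists for all solutions close to $\bar{x}$.
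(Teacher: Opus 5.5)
Your proposal is correct and follows the paper's route: the Lyapunov function $V(x)=\tfrac12\|x-\bar x\|^2$, Lemma~\ref{lem:Vdot_zero}, Proposition~\ref{prop:equilibrium_VI}, and LaSalle's principle. You rightly note that the equality case of Lemma~\ref{lem:Vdot_zero} rests on the nonexpansiveness of $T$ on $U$, not on monotonicity. You also supply what the paper only asserts: forward invariance of $\setS$, a compact positively invariant set $\bar B\subset U$, and uniqueness for the semiflow. Beyond that, you add the Opial-type upgrade to convergence to a single solution.

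Your qualification is right, and the paper's proof does not address it, since that proof only yields $\operatorname{dist}(x(t),\operatorname{SOL}(F,\setS))\to 0$. With $F\equiv 0$ and $\matM\equiv I$, all hypotheses hold, yet every point of $\setS$ is an equilibrium. So $\bar x$ is asymptotically stable only when it is isolated in the solution set, which under monotonicity (where the solution set is convex) means unique. Your ``second subtlety'' is moot: Lemma~\ref{lem:T_nonexp_sufficient} gives $\|T(x)-T(y)\|\le\|x-y\|$ for all pairs $x,y\in U$, so $\|T(x)-x^*\|\le\|x-x^*\|$ holds automatically for every solution $x^*\in U$.
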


\begin{proof}
By Theorem~\ref{thm:Lyap_stability_I}, $\bar{x}$ is Lyapunov stable and
the function $V(x(t))$ is nonincreasing along trajectories starting in
a neighborhood $U$ of $\bar{x}$.
Hence trajectories remain in a compact subset of $U$.

By Lemma~\ref{lem:Vdot_zero}, the largest invariant set contained in
\[
\{x \in U : \frac{dV(x(t))}{dt} = 0\}
\]
is precisely the set of fixed points of $T$, which coincides with the
solution set of $\mathrm{VI}(F,\setS)$.
LaSalle’s invariance principle therefore implies that
\[
\lim_{t\to\infty} \operatorname{dist}\bigl(x(t), \operatorname{SOL}(F,\setS)\bigr) = 0,
\]
where $\operatorname{SOL}(F,\setS)$ denotes the solution set of the
variational inequality.
\end{proof}

\begin{remark}\label{rem:asymptotic_scope}
Theorem~\ref{thm:asymptotic} establishes asymptotic convergence without
requiring strong monotonicity of $F$ or a fixed metric.
The result highlights the role of state-dependent metric regularity in
preserving the invariance structure necessary for LaSalle-type
arguments. Stronger assumptions will be required to obtain convergence
rates, which is the subject of the next section.
\end{remark}

\section{Stability III: Exponential convergence}\label{sec:stabIII}

In this section we strengthen the asymptotic convergence result of
Section~4 by establishing local exponential convergence of trajectories
of the state-dependent projection neural network under a strong monotonicity
assumption on $F$.

\subsection{Strong monotonicity}

\begin{assumption}[Strong monotonicity]\label{ass:strong_monotone}
The mapping $F:\R^n\to\R^n$ is strongly monotone on $\setS$ with modulus
$\mu>0$, i.e.,
\[
\inner{F(x)-F(y)}{x-y} \ge \mu\|x-y\|^2,
\qquad \forall x,y\in\setS.
\]
\end{assumption}

Under Assumption~\ref{ass:strong_monotone}, the variational inequality
$\mathrm{VI}(F,\setS)$ admits a unique solution, which we denote by
$\bar{x}$.

\subsection{A contractive estimate}

Recall the fixed-point representation
\[
\frac{dx}{dt} = \lambda\bigl(T(x)-x\bigr),
\qquad
T(x) =
P_{\setS,\matM^{-1}(x)}\bigl(x-\alpha \matM(x)F(x)\bigr).
\]

The following lemma refines the nonexpansiveness result obtained in
Section~4 into a strict contraction estimate near $\bar{x}$.

\begin{lemma}[Local contraction of $T$]\label{lem:T_contraction}
Suppose Assumptions~\ref{ass:metric_bounds},
\ref{ass:F_Lipschitz}, \ref{ass:M_Lipschitz},
and \ref{ass:strong_monotone} hold.
Then there exist constants $\rho \in (0,1)$ and a neighborhood
$U \subset \setS$ of $\bar{x}$ such that
\[
\|T(x)-\bar{x}\| \le \rho \|x-\bar{x}\|,
\qquad \forall x\in U.
\]
\end{lemma}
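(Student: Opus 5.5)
The plan is to work directly from the variational characterization \eqref{eq:proj_char} of $y:=T(x)$, not through the Lipschitz bounds of Lemma~\ref{lem:T_nonexp_sufficient}. Those bounds carry the factor $L^2\ge 1$ and can never give a strict contraction. Write $W:=\matM^{-1}(x)$, $e:=x-\bar x$ and $d:=T(x)-\bar x$.

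\textbf{Step 1: the basic inequality.} Apply \eqref{eq:proj_char} with metric $W$, argument $G(x)=x-\alpha\matM(x)F(x)$ and test point $w=\bar x\in\setS$. Since $W\matM(x)=I$, this gives
\[
(d-e)^T W(-d)+\alpha\,F(x)^T(\bar x-y)\ge 0 .
\]
Add the VI inequality $\alpha F(\bar x)^T(y-\bar x)\ge 0$, which holds because $\bar x$ solves $\mathrm{VI}(F,\setS)$ by Proposition~\ref{prop:equilibrium_VI}. This yields
\[
\|d\|_W^2\le \langle e,d\rangle_W-\alpha\langle F(x)-F(\bar x),d\rangle .
\]
The important point is that the metric enters only through the single matrix $W=\matM^{-1}(x)$. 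Hence no term measuring the variation of the metric appears at this stage, and $K_M$ enters only through the norm conversion in Step~3.

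\textbf{Step 2: split the $F$-term and use strong monotonicity.} Write $d=e+(d-e)$. Assumption~\ref{ass:strong_monotone} gives $\langle F(x)-F(\bar x),e\rangle\ge\mu\|e\|^2$. Assumption~\ref{ass:F_Lipschitz} bounds the remainder by $\alpha K\|e\|\,\|d-e\|$. Combining these with Cauchy--Schwarz in $\langle\cdot,\cdot\rangle_W$ and the identity $\langle e,d\rangle_W=\tfrac12(\|e\|_W^2+\|d\|_W^2-\|d-e\|_W^2)$, I expect
\[
\|d\|_W^2\le\|e\|_W^2-2\alpha\mu\|e\|^2+\alpha^2 K^2L\,\|e\|^2 .
\]
The quadratic term comes from Young's inequality used to absorb $\|d-e\|_W^2$.

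\textbf{Step 3: return to the Euclidean norm (the main obstacle).} Converting the weighted inequality with the uniform bounds of Assumption~\ref{ass:metric_bounds} costs a factor of the condition number. That loss destroys the contraction for small $\alpha$ when $L>1$. To avoid it, I will first freeze the metric at $\bar W:=\matM^{-1}(\bar x)$ rather than at a generic $x$. By Assumption~\ref{ass:M_Lipschitz}, $\|W-\bar W\|\le L^2K_M\|e\|$, so on a small ball $U$ the norms $\|\cdot\|_W$ and $\|\cdot\|_{\bar W}$ agree up to a factor $1+O(\|e\|)$.

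Next I will use the parameter choice implicit in the framework, namely \eqref{eq:stepsize_condition} together with $\alpha$ small relative to $\mu$. Dividing Step~2 by the Euclidean quantities then gives the ratio
\[
\frac{\|d\|^2}{\|e\|^2}\le \kappa\Bigl(1-\tfrac{2\alpha\mu}{L}+\alpha^2K^2L^2\Bigr)\bigl(1+O(\|e\|)\bigr),
\]
where $\kappa$ is the local distortion between $\|\cdot\|_{\bar W}$ and $\|\cdot\|$ restricted to the directions $e,d$.

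The step I expect to be delicate is showing $\kappa\le 1+O(\alpha)$ with a constant that the gain $2\alpha\mu/L$ dominates. My first attempt is to exploit that $d-e=T(x)-x=O(\alpha\|e\|)$, since $T(\bar x)=\bar x$. Because $d$ and $e$ nearly coincide, $\|d\|_{\bar W}^2/\|e\|_{\bar W}^2$ and $\|d\|^2/\|e\|^2$ differ by only $O(\alpha)\cdot L^2K$.

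I then choose $\alpha$ so that the net first-order coefficient is negative, set $\rho^2:=1-c\alpha<1$, and shrink $U$ so that the $O(\|e\|)$ metric-variation factor still keeps the ratio at most $\rho^2$. This delivers $\|T(x)-\bar x\|\le\rho\|x-\bar x\|$ on $U$. If the comparison of $d$ with $e$ fails to control $\kappa$, the fallback is to state the contraction in $\|\cdot\|_{\matM(\bar x)^{-1}}$, which suffices for the exponential estimate that follows, and to record the Euclidean version as a corollary under the stated smallness of $\alpha$.
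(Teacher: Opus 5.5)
Your Steps~1 and~2 are correct, and your diagnosis of the paper's route is right. The paper bounds $\|T(x)-\bar x\|$ by $L^2\|G(x)-G(\bar x)\|+o(\|x-\bar x\|)$ and then asserts $\rho=1-\alpha\mu+\alpha c<1$ for small $\alpha$. But the prefactor $L^2\ge 1$ precludes a contraction, and in any case $1-\alpha\mu+\alpha c<1$ is the $\alpha$-independent condition $c<\mu$. Your inequality $\|d\|_W^2\le\|e\|_W^2-(2\alpha\mu-\alpha^2K^2L)\|e\|^2$, with $W=\matM^{-1}(x)$, checks out. It gives a strict contraction in $\|\cdot\|_W$ for $0<\alpha<2\mu/(K^2L)$; condition \eqref{eq:stepsize_condition} plays no role.

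The gap is Step~3. The gain $2\alpha\mu/L$ and the distortion $\kappa-1$ are \emph{both} first order in $\alpha$. So making ``the net first-order coefficient negative'' is a sign condition on an $\alpha$-independent quantity, which no choice of $\alpha$ can enforce, and it genuinely fails. Take $\bar x$ interior to $\setS$ (e.g.\ $\setS=\R^2$), $\matM\equiv\diag(1,1/10)$, and $F(x)=A(x-\bar x)$ with $A=\begin{pmatrix}1&1\\1&2\end{pmatrix}$. All assumptions hold, with $L=10$ and $F(\bar x)=0$. Near $\bar x$ one has $T(x)-\bar x=(I-\alpha\matM A)(x-\bar x)$, and for $x=\bar x+t(1,-2)$
\[
\|T(x)-\bar x\|^2=(5+0.8\alpha+1.09\alpha^2)\,t^2>\|x-\bar x\|^2\qquad\text{for every }\alpha>0 .
\]
The obstruction is that $\matM A+A\matM$ is indefinite. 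So the Euclidean contraction in the statement is false, and neither your argument nor the paper's can establish it. The ``Euclidean corollary under smallness of $\alpha$'' you intend to record does not exist. The same example also contradicts Lemma~\ref{lem:T_nonexp_sufficient}.

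What is true is your fallback, and it is the right repair. Since $\|\matM^{-1}(x)-\matM^{-1}(\bar x)\|\le L^2K_M\|x-\bar x\|$, replacing $W$ by $\bar W=\matM^{-1}(\bar x)$ changes the squared norms by at most a factor $1+L^3K_M\|x-\bar x\|$ in each direction. This is absorbed by shrinking $U$, so $\|T(x)-\bar x\|_{\bar W}\le\rho\|x-\bar x\|_{\bar W}$ on $U$ with $\rho<1$.

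Your claim that this ``suffices for the exponential estimate'' needs qualification. It feeds Theorem~\ref{thm:exponential} only after switching to $V=\tfrac12\|x-\bar x\|_{\bar W}^2$. The resulting Euclidean bound then carries a prefactor: $\|x(t)-\bar x\|\le L\,e^{-\gamma t}\|x(0)-\bar x\|$. The constant-one bound stated there also fails in the example above: at $x=\bar x+t(1,-2)$,
\[
\tfrac{d}{dt}\|x-\bar x\|^2=-2\lambda\alpha\,(x-\bar x)^T\matM A(x-\bar x)=0.8\lambda\alpha t^2>0 .
\]
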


\begin{proof}
For $x$ sufficiently close to $\bar{x}$, write
\[
\|T(x)-\bar{x}\|
=
\|P_{\setS,\matM^{-1}(x)}(x-\alpha\matM(x)F(x))
-
P_{\setS,\matM^{-1}(\bar{x})}(\bar{x}-\alpha\matM(\bar{x})F(\bar{x}))\|.
\]
Using Lipschitz continuity of the projection with respect to its
argument (uniformly under Assumption~\ref{ass:metric_bounds}) and
continuity with respect to the metric, we obtain
\[
\|T(x)-\bar{x}\|
\le
L^2\|x-\bar{x}-\alpha(\matM(x)F(x)-\matM(\bar{x})F(\bar{x}))\|
+ o(\|x-\bar{x}\|).
\]

As in Lemma~\ref{lem:forward_Lipschitz}, decompose
\[
\matM(x)F(x)-\matM(\bar{x})F(\bar{x})
=
\matM(x)(F(x)-F(\bar{x}))
+
(\matM(x)-\matM(\bar{x}))F(\bar{x}).
\]
By strong monotonicity,
\[
\inner{F(x)-F(\bar{x})}{x-\bar{x}}
\ge
\mu\|x-\bar{x}\|^2,
\]
and by Lipschitz continuity of $F$ and $\matM(\cdot)$,
\[
\|\matM(x)(F(x)-F(\bar{x}))\| \le L K \|x-\bar{x}\|,
\qquad
\|(\matM(x)-\matM(\bar{x}))F(\bar{x})\|
\le K_M\|F(\bar{x})\|\|x-\bar{x}\|.
\]
Combining the above estimates yields
\[
\|T(x)-\bar{x}\|
\le
\bigl(1-\alpha\mu + \alpha c\bigr)\|x-\bar{x}\|
\]
for some constant $c>0$ depending on $L,K,K_M,\|F(\bar{x})\|$.
Choosing $\alpha>0$ sufficiently small ensures $\rho:=1-\alpha\mu+\alpha c<1$,
which completes the proof.
\end{proof}

\subsection{Exponential stability}

We now establish the main result of this section. When $F$ is strongly monotone, the variational inequality has a unique solution and the dynamics should contract toward it at a definite rate. The next theorem shows that, under the same regularity and small-step conditions, the state-dependent projection neural network are locally exponentially convergent.

\begin{theorem}[Local exponential stability]\label{thm:exponential}
Suppose Assumptions~\ref{ass:metric_bounds},
\ref{ass:F_Lipschitz}, \ref{ass:M_Lipschitz},
and \ref{ass:strong_monotone} hold.
Then there exist constants $\gamma>0$ and $\delta>0$ such that every
trajectory $x(\cdot)$ of \eqref{eq:SD_dynamics} with
$\|x(0)-\bar{x}\|<\delta$ satisfies
\[
\|x(t)-\bar{x}\| \le \|x(0)-\bar{x}\| e^{-\gamma t},
\qquad \forall t\ge 0.
\]
\end{theorem}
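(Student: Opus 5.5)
The plan is to reuse the quadratic Lyapunov function $V(x)=\tfrac12\|x-\bar{x}\|^2$ from Theorem~\ref{thm:Lyap_stability_I}, now combined with the strict contraction estimate of Lemma~\ref{lem:T_contraction} in place of mere nonexpansiveness. Let $\rho\in(0,1)$ and the neighborhood $U$ be those from Lemma~\ref{lem:T_contraction}. Choose $\delta>0$ so small that the closed ball $B(\bar{x},\delta)\cap\setS$ is contained in $U$, and set $\gamma:=\lambda(1-\rho)>0$.

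The first step is the differential inequality. Along any trajectory with $x(t)\in U$, exactly as in the proof of Theorem~\ref{thm:Lyap_stability_I},
\[
\frac{dV(x(t))}{dt}=\lambda\inner{x-\bar{x}}{T(x)-\bar{x}}-\lambda\|x-\bar{x}\|^2 .
\]
Applying Cauchy--Schwarz and Lemma~\ref{lem:T_contraction} to the first term gives
\[
\frac{dV}{dt}\le -\lambda(1-\rho)\|x-\bar{x}\|^2=-2\gamma V .
\]
Grönwall's inequality then yields $V(x(t))\le V(x(0))e^{-2\gamma t}$, that is, $\|x(t)-\bar{x}\|\le\|x(0)-\bar{x}\|e^{-\gamma t}$, for as long as the trajectory stays in $U$. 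No monotonicity computation is needed at this stage, because all of it is already contained in $\rho$.

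The second step removes the qualifier ``as long as $x(t)\in U$'' by a continuation argument. Let $t^*$ be the supremum of the times $\tau$ for which $x(s)\in B(\bar{x},\delta)$ for all $s\in[0,\tau]$. On $[0,t^*)$ the estimate above shows that $\|x(t)-\bar{x}\|$ is nonincreasing, so it stays at most $\|x(0)-\bar{x}\|<\delta$. Hence the trajectory cannot reach the boundary of the ball, and $t^*=\infty$. Two facts are needed here. The first is that the trajectory remains in $\setS$, where $T$ and Lemma~\ref{lem:T_contraction} are defined. This holds because $T(x)\in\setS$, so $\dot x=\lambda(T(x)-x)$ points into the convex set $\setS$, and a standard tangent-cone or Euler-scheme argument gives forward invariance of $\setS$. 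The second is that the solution exists globally. It remains in a compact set and $\Phi$ is continuous (Lemma~\ref{lem:Phi_cont}), so it can be extended for all $t\ge0$. If the trajectory is not unique, the estimate still applies to every solution, since the argument uses only the differential inequality.

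The main obstacle is not this Lyapunov computation, which is routine once the contraction is available. It is that the constant $\rho<1$ from Lemma~\ref{lem:T_contraction} depends on $\alpha$ being small and on absorbing the $o(\|x-\bar{x}\|)$ metric-variation terms. The theorem should therefore be read with that lemma's implicit step-size requirement, $\alpha$ small enough that $\rho=1-\alpha\mu+\alpha c<1$ with the constants taken as in that lemma. The neighborhood $U$, and hence $\delta$, must also be chosen small enough that the $o(\cdot)$ remainder is dominated. I would state this dependence explicitly and take $\gamma=\lambda(1-\rho)$ as the resulting rate.
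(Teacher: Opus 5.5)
Your proposal is correct and follows the paper's proof. It uses the same $V(x)=\frac12\|x-\bar{x}\|^2$, Cauchy--Schwarz with $\|T(x)-\bar{x}\|\le\rho\|x-\bar{x}\|$ from Lemma~\ref{lem:T_contraction} to get $\frac{dV}{dt}\le-2\lambda(1-\rho)V$, and Gr\"onwall with $\gamma=\lambda(1-\rho)$. Your continuation argument keeping $x(t)$ in $B(\bar{x},\delta)\subset U$ and in $\setS$ supplies a step the paper leaves implicit; one aside on your closing remark: $1-\alpha\mu+\alpha c<1$ is equivalent to $c<\mu$, so taking $\alpha$ small does not by itself secure it.
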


\begin{proof}
Let $V(x)=\frac{1}{2}\|x-\bar{x}\|^2$.
Along trajectories of \eqref{eq:SD_dynamics_T},
\[
\frac{dV(x(t))}{dt}
=
\lambda\,\inner{x-\bar{x}}{T(x)-x}.
\]
By Lemma~\ref{lem:T_contraction}, for $x$ in a neighborhood of $\bar{x}$,
\[
\inner{x-\bar{x}}{T(x)-x}
=
\inner{x-\bar{x}}{T(x)-\bar{x}} - \|x-\bar{x}\|^2
\le
-(1-\rho)\|x-\bar{x}\|^2.
\]
Thus
\[
\frac{dV(x(t))}{dt}\le -2\lambda(1-\rho)V(x).
\]
Setting $\gamma := \lambda(1-\rho)$ and applying Gronwall’s inequality
yields the claimed exponential decay.
\end{proof}

\begin{remark}\label{rem:exp_scope}
The exponential convergence result is local and relies on a small-step
regime ensuring contraction of the state-dependent fixed-point map $T$.
The rate $\gamma$ depends explicitly on the strong monotonicity modulus
of $F$ and on the regularity of the metric $\matM(\cdot)$.
Global exponential convergence may fail in general when the metric
varies with the state.
\end{remark}

\begin{remark}
The constants $\rho$, $\gamma$, and $\delta$ appearing in
Lemma~\ref{lem:T_contraction} and Theorem~\ref{thm:exponential}
are established in an existential manner and are not intended to be
computed explicitly. Their role is to certify local contraction and
exponential stability under sufficiently small step sizes and metric
variations. Obtaining explicit expressions for these constants would
require global bounds on the Lipschitz constants of $F$ and $M(\cdot)$,
as well as detailed spectral information on the state-dependent metric,
which is beyond the scope of the present analysis.
\end{remark}

\section{Illustrative example}\label{sec:example}

We present a simple variational inequality for which the solution is
unique and can be characterized explicitly, and we illustrate how a
state-dependent metric can be interpreted as a continuous-time
preconditioner.

Let $\setS := [0,1]^n$ and consider the affine mapping
\[
F(x) := Qx + q,
\]
where $Q \in \R^{n\times n}$ is symmetric positive definite and
$q\in\R^n$. Then $F$ is strongly monotone on $\R^n$ with modulus
$\lambda_{\min}(Q)$, hence $\mathrm{VI}(F,\setS)$ has a unique solution
$\bar{x} \in \setS$.

Moreover, $\bar{x}$ is the unique minimizer of the strongly convex
quadratic program
\[
\min_{x\in\setS} \ \frac12 x^T Q x + q^T x,
\]
and therefore it satisfies the first-order optimality condition
\[
\inner{Q\bar{x} + q}{x-\bar{x}} \ge 0,
\quad \forall x\in\setS,
\]
which coincides with $\mathrm{VI}(F,\setS)$.

We consider the state-dependent projection neural network
\[
\frac{dx}{dt}
=
\lambda\Bigl[
P_{\setS,\matM^{-1}(x)}\bigl(x-\alpha\matM(x)(Qx+q)\bigr)-x
\Bigr].
\]
As a representative example of a smoothly varying diagonal metric, we
set
\[
\matM(x) := \diag(m_1(x),\dots,m_n(x)),
\qquad
m_i(x) := m_{\min} + (m_{\max}-m_{\min})\bigl(4x_i(1-x_i)\bigr),
\]
where $0<m_{\min}\le m_{\max}$.
This choice satisfies Assumption~\ref{ass:metric_bounds} on $\setS$ and
provides a simple mechanism for coordinate-wise scaling that varies
with the state.

\section{Numerical Experiments}\label{sec:numerical}

The goal of this numerical section is to illustrate the geometric mechanisms induced by state-dependent metrics, rather than to benchmark performance against other algorithms. We use low-dimensional examples to visualize how different metric choices shape trajectories, interact with constraints, and influence transient behavior, while all methods converge to the same solution of the underlying variational inequality.

All numerical examples are conducted in regimes where the assumptions of
the theoretical analysis are satisfied locally along the generated
trajectories. In particular, the state-dependent metrics are smooth and
uniformly positive definite on the compact regions explored by the
solutions, and the step-size parameters are chosen conservatively to
ensure contraction of the induced fixed-point map, in accordance with
Lemma~\ref{lem:T_contraction} and Theorem~\ref{thm:exponential}. The focus
is therefore on qualitative geometric behavior rather than global
performance guarantees. 

The numerical experiments are implemented in \textsc{Julia}~1.12.3 and
executed on a personal computer equipped with an Intel
Core\texttrademark~i9-9900K processor (3.6~GHz) and 32~GB of RAM. All
computations are performed in double precision. The reported results are
fully reproducible and are intended to complement, rather than extend,
the scope of the theoretical analysis.
\subsection{Example 1: Geometry-aware behavior under state-dependent metrics}

We consider a two-dimensional strongly monotone variational inequality designed to
highlight geometric effects induced by state-dependent metrics.
The example is kept low-dimensional to allow direct visualization and to isolate
metric effects from other algorithmic factors.
In \eqref{eq:VI}, the feasible set is
\[
\setS := [-1,1]^2 \subset \mathbb{R}^2,
\]
and the operator is affine, $F(x)=Ax-b$, where
\[
A = R(\phi)^\top
\begin{pmatrix}
\kappa & 0 \\
0 & 1
\end{pmatrix}
R(\phi),
\qquad
R(\phi)=
\begin{pmatrix}
\cos\phi & -\sin\phi \\
\sin\phi & \cos\phi
\end{pmatrix}.
\]
We fix $\kappa=50$ and $\phi=\pi/6$, so that $A$ is symmetric positive definite,
strongly anisotropic, and rotated with respect to the coordinate axes.
Consequently, $F$ is strongly monotone and Lipschitz continuous, and
$\mathrm{VI}(F,\setS)$ admits a unique solution.

To make the solution explicit, we prescribe
\[
\bar{x} := (0.8,-0.6)\in \operatorname{int}(\setS),
\qquad
b := A \bar{x},
\]
so that $\bar{x}$ is the unique solution of $\mathrm{VI}(F,\setS)$.

\paragraph{Metrics under comparison.}
We compare the projected dynamics under three metric choices:
\begin{enumerate}
\item the Euclidean metric $\matM(x)\equiv I$;
\item a fixed metric $\matM(\bar{x})$, obtained by freezing the state-dependent metric at the solution;
\item a state-dependent metric
\[
\matM(x)
=
R(\theta(x))^\top
\begin{pmatrix}
1 + r^2 & 0 \\
0 & 2 + r^2
\end{pmatrix}
R(\theta(x)),
\qquad
r = \|x\|,
\]
where $\theta(x)=\arctan2(x_2,x_1)$.
\end{enumerate}
The state-dependent metric introduces position-dependent anisotropy and rotation:
its principal directions rotate along trajectories and its conditioning increases
with distance from the origin, behavior that cannot be reproduced by any fixed
positive definite matrix.

For each metric, we simulate the SD-SPNN dynamics \eqref{eq:SD_dynamics}
from identical initial points and with identical parameters.
Unless stated otherwise, the step size is fixed to $\alpha=0.1$, which lies in the
conservative regime suggested by the local contraction analysis and yields stable
behavior across all metrics.
Larger step sizes are considered separately as stress tests beyond the locally
guaranteed regime.

The purpose of this experiment is not performance comparison or step-size tuning,
but to examine how metric choice shapes the geometry of the induced flow.
We focus on trajectory curvature, orientation, boundary interaction, and the way
state-dependent metrics adapt the geometry along trajectories.
All three dynamics share the same equilibrium and local convergence guarantees;
the differences observed below are therefore transient and geometric in nature.

\subsection{Trajectory geometry and metric-induced flow adaptation}

We first examine how metric choice affects the geometric structure of SD-SPNN
trajectories.
All experiments use $\alpha=0.1$ and identical initial conditions located at
corners and edge midpoints of $\setS=[-1,1]^2$, a regime yielding stable behavior
for all metrics.

Figure~\ref{fig:traj_compare} shows the resulting trajectories.
Although all trajectories converge to the same equilibrium $\bar{x}$, their
transient behavior and boundary interactions differ substantially.

Under the Euclidean metric, trajectories exhibit pronounced piecewise-linear motion
with frequent direction changes at the boundary.
Corner initializations tend to slide along faces and alternate between
coordinate-aligned directions, producing characteristic zig--zagging behavior.
This reflects the mismatch between Euclidean geometry and the strong anisotropy
of $F$.

With the fixed metric $\matM(\bar{x})$, trajectories are noticeably smoother.
The fixed metric partially compensates for anisotropy and reduces boundary
oscillations, yielding more direct paths toward $\bar{x}$.
However, because the metric orientation and scaling are constant, the geometry
cannot adapt to the local state, and early transients may still deviate from
coherent descent directions.

In contrast, the state-dependent metric adapts continuously to the evolving state.
Its principal directions rotate and rescale along trajectories, producing smoothly
curved paths that steer away from repeated boundary reflections and toward $\bar{x}$.
This adaptive behavior is most evident for corner initializations and cannot be
replicated by any fixed metric.

\begin{figure}[t]
    \centering
    \includegraphics[width=\textwidth]{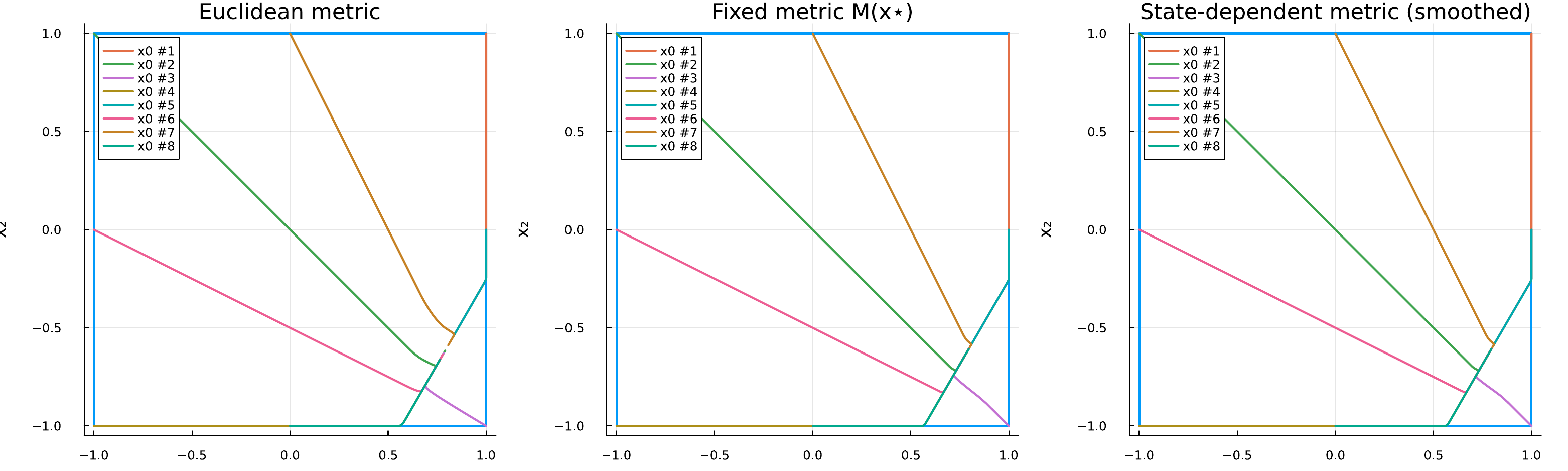}
    \caption{Trajectories of the SD-SPNN dynamics for $\alpha=0.1$ under three
    metric choices, starting from identical corner and edge initial points.
    Left: Euclidean metric. Center: fixed metric $\matM(\bar{x})$.
    Right: state-dependent metric (smoothed). All trajectories converge to
    $\bar{x}$ but exhibit distinct transient geometric behavior.}
    \label{fig:traj_compare}
\end{figure}

Table~\ref{tab:geom_stats} summarizes quantitative indicators over $N=50$ random
initial points uniformly distributed on $\setS$.
For this conservative step size, no trajectory reaches the strict tolerance
$\|r(x)\|\le10^{-6}$ within the time horizon; nevertheless, all metrics achieve
substantial residual reduction and Lyapunov decrease.
The fixed and state-dependent metrics attain significantly smaller final residuals
and Lyapunov values than the Euclidean metric, at the cost of increased metric
conditioning, reflecting their anisotropic and adaptive nature.

\begin{table}[t]
\centering
\caption{Summary statistics for random initializations ($N=50$) with $\alpha=0.1$.
Success is defined by $r_{\text{final}}\le10^{-2}$. Reported values are medians
across runs, except for the maximum condition number $\kappa_{M}$, reported as
the maximum observed along trajectories (then maximized across runs).}
\label{tab:geom_stats}
\begin{tabular}{lccccc}
\toprule
Metric & Success rate & Median $r_{\mathrm{final}}$ & Median $V_{\mathrm{final}}$
& Max $\kappa_M$ & Mean active constraints \\
\midrule
euclid & 13.8\% & $3.68e-02$ & $6.76e-04$ & 1.00 & 0.006 \\
fixed & 51.7\% & $9.33e-03$ & $9.79e-06$ & 1.50 & 0.004 \\
sd\_raw & 51.7\% & $9.22e-03$ & $9.59e-06$ & 2.00 & 0.004 \\
sd\_smooth & 51.7\% & $9.22e-03$ & $9.59e-06$ & 2.00 & 0.004 \\
\bottomrule
\end{tabular}
\end{table}

The differences in Figure~\ref{fig:traj_compare} and Table~\ref{tab:geom_stats}
should not be interpreted as differences in asymptotic convergence, since all
metrics share the same equilibrium and local guarantees.
Rather, they illustrate how metric choice shapes the transient geometry of the
projected flow, with state-dependent metrics providing directional adaptation
unattainable by Euclidean or fixed metrics.

\paragraph{Rotated-box robustness check.}
To verify that the geometric effects in Figure~\ref{fig:traj_compare} are not an artifact of
axis-aligned clipping, we repeat the trajectory experiment on the rotated box
\(\mathcal S_\theta := R(\theta)[-1,1]^2\) with \(\theta=\pi/8\).
All other ingredients are unchanged: the operator, metrics, step size \(\alpha=0.1\),
and deterministic initializations (mapped onto \(\mathcal S_\theta\)).
Figure~\ref{fig:traj_compare_rotbox} exhibits the same qualitative separation between metrics:
the Euclidean flow shows more pronounced boundary-induced direction changes,
the fixed metric partially mitigates these effects, and the state-dependent metric
produces the smoothest geometry.
This confirms that the observed behavior reflects metric-induced shaping of the projected flow,
rather than a coordinate-aligned projection artifact.
\begin{figure}[t]
    \centering
    \includegraphics[width=\textwidth]{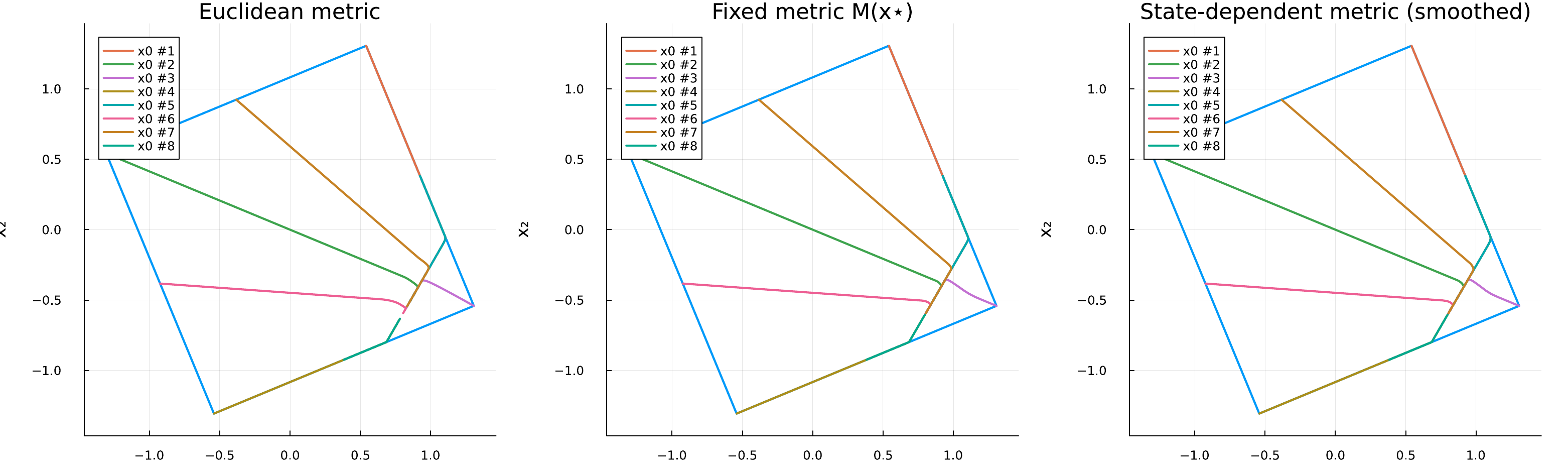}
    \caption{Trajectories for the same setup as Figure~\ref{fig:traj_compare}, but on the rotated
    box \(\mathcal S_\theta = R(\theta)[-1,1]^2\) with \(\theta=\pi/8\).
    Left: Euclidean metric. Center: fixed metric \(\matM(\bar{x})\).
    Right: state-dependent metric. The qualitative geometric conclusions persist
    under non-axis-aligned constraints.}
    \label{fig:traj_compare_rotbox}
\end{figure}

\subsection{Example 2 : Region-based eigenvector rotation}
\label{subsec:ex2_region_rotation}

This flagship example is designed to isolate a geometric effect that cannot be reproduced by any
fixed SPD metric: \emph{state-dependent rotation of the metric eigenvectors}.
On the box constraint $\mathcal S=[-1,1]^2$, we equip the SD-SPNN mapping with a smoothly varying
angle field
\begin{equation}\label{eq:ex2_theta_region}
\theta(x)=\phi+\omega\,\tanh(\beta x_1),
\end{equation}
and define the associated state-dependent metric
\begin{equation}\label{eq:ex2_M_region}
M_{\mathrm{reg}}(x)=R(\theta(x))^\top \Lambda\,R(\theta(x)),
\qquad
\Lambda=\mathrm{diag}(1,\gamma),
\end{equation}
where $R(\theta)$ is the planar rotation matrix. In contrast to any fixed SPD matrix (whose
eigendirections are constant in space), the eigenvectors of $M_{\mathrm{reg}}(x)$ vary with $x_1$,
thereby changing the local geometry experienced by the projection step.
Throughout, we set $\phi=0$, $\omega=3\pi/4$, $\beta=3$, and $\gamma=8$, so that the anisotropy is
uniformly bounded (constant $\kappa_M=\gamma$) while the eigenframe rotates with the state.

We run the continuous-time SD-SPNN dynamics with a conservative step size $\alpha=0.1$ on the finite
horizon $t\in[0,T]$ (same integrator and tolerances as in Example~1), and we compare three geometries:
(i) Euclidean ($M\equiv I$), (ii) a fixed SPD metric obtained by freezing \eqref{eq:ex2_M_region} at the
solution $\bar{x}$, and (iii) the state-dependent region metric $M_{\mathrm{reg}}(x)$.
All trajectories are projected onto $\mathcal S$ using the same metric projection routine.

Figure~\ref{fig:ex2_traj_region} reports a trajectory comparison from deterministic initializations.
The Euclidean and fixed-metric baselines impose a globally uniform geometry: the former is isotropic,
while the latter introduces a constant anisotropic bias through a fixed eigenframe. In contrast, the
state-dependent region metric produces smoothly varying direction changes along the trajectories,
reflecting the fact that the preferred directions (metric eigenvectors) rotate with the state. Since
$\Lambda$ is constant in \eqref{eq:ex2_M_region}, these differences are attributable to eigenvector
rotation rather than state-dependent scaling. All trajectories converge to the same equilibrium; the observed differences reflect transient geometric effects induced by the metric choice, consistent with local stability theory.

To directly verify the mechanism, Figure~\ref{fig:ex2_theta_region} plots the evolution of the angle
$\theta(x(t))$ along the state-dependent trajectories. The observed variation in $\theta(x(t))$ across
time confirms that the metric eigenframe changes along the flow, providing a direct explanation for
the geometric separation in Figure~\ref{fig:ex2_traj_region}. Consistent with the Numerics Blueprint,
we make no global superiority claims; the point of this example is to exhibit a geometry that a fixed
SPD metric cannot mimic.

All metrics converge to the same equilibrium; the observed differences reflect transient geometric effects induced by the metric choice, consistent with local stability theory.

\begin{figure}[t]
  \centering
  \includegraphics[width=0.82\linewidth]{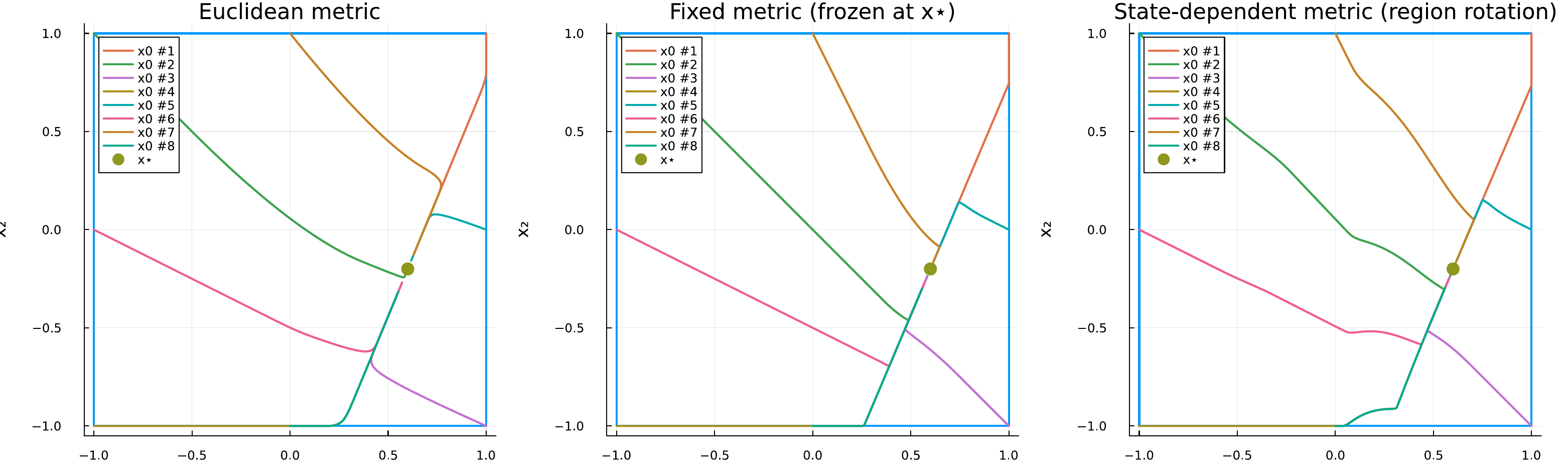}
  \caption{Example 2 (flagship): trajectory geometry on $\mathcal S=[-1,1]^2$ for Euclidean, fixed SPD
  (region metric frozen at $\bar{x}$), and the state-dependent region metric \eqref{eq:ex2_M_region}
  with angle field \eqref{eq:ex2_theta_region}. The state-dependent metric yields smoother trajectories and altered transient geometry, while converging to the same solution.}
  \label{fig:ex2_traj_region}
\end{figure}

\begin{figure}[t]
  \centering
  \includegraphics[width=0.72\linewidth]{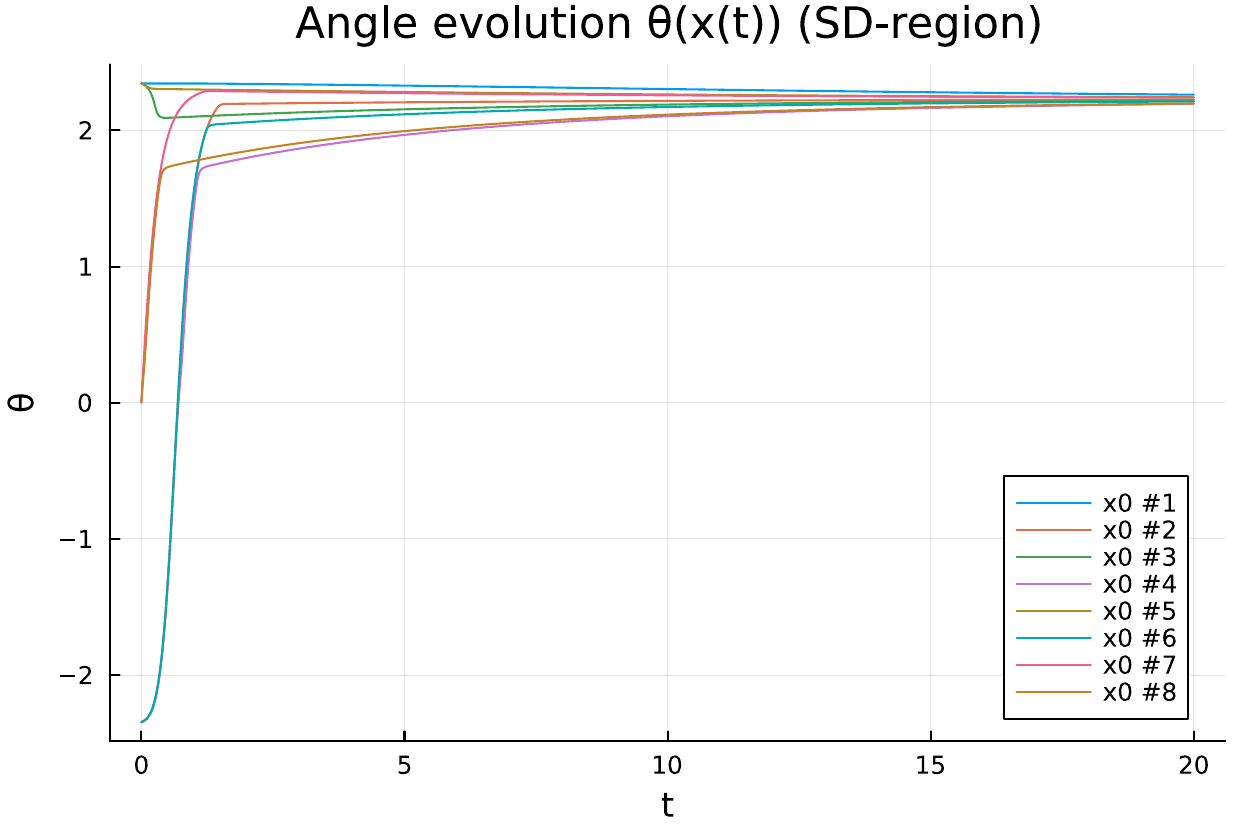}
  \caption{Example 2 diagnostic: Evolution of the metric eigenframe angle $\theta(x(t))$, illustrating state-dependent rotation rather than convergence acceleration. The time variation of $\theta(x(t))$ confirms
  that the eigenframe rotates along the flow, explaining the geometric effects observed in
  Figure~\ref{fig:ex2_traj_region}.}
  \label{fig:ex2_theta_region}
\end{figure}

\subsection{Example 3: Nonlinear monotone VI (interior solution)}

We consider a nonlinear variational inequality on the box
$\mathcal S=[-1,1]^2$ with operator
\[
F(x)=Qx-b+\gamma\bigl(e^{-c x}-\mathbf{1}\bigr),
\]
where $Q=R(\varphi_Q)^\top\mathrm{diag}(\kappa,1)R(\varphi_Q)$ is symmetric
positive definite.
The parameters are $\kappa=25$, $\varphi_Q=0.314$, $\gamma=0.3$, and $c=2.0$.
The right-hand side $b$ is chosen so that the unique solution is
$\bar{x}=(0.4,-0.3)$, which lies strictly in the interior of $\mathcal S$.

We compare three geometries: the Euclidean metric, a fixed metric frozen at
$M(\bar{x})$, and a state-dependent metric whose eigenvectors rotate with the
state. All experiments use the same step size $\alpha=0.1$ and are integrated
until the residual norm satisfies $\|r(x)\|\le10^{-6}$.

\begin{figure}
    \centering
    \includegraphics[width=0.75\linewidth]{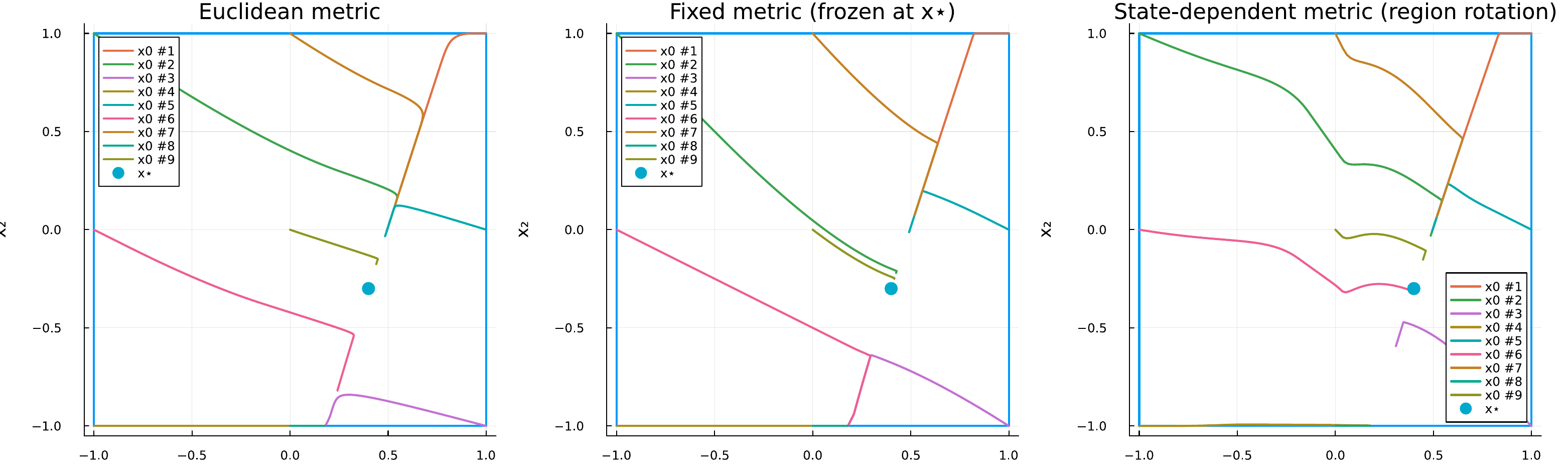}
    \caption{Trajectory comparison for the nonlinear interior solution.
Left: Euclidean metric.
Center: fixed metric frozen at $M(\bar{x})$.
Right: state-dependent metric with region-based eigenvector rotation.
All trajectories converge to the same interior solution $\bar{x}=(0.4,-0.3)$,
but the transient geometry differs significantly across metrics.}
\label{fig:nl_int_traj}
\end{figure}

Figure~\ref{fig:nl_int_traj} shows representative trajectories from multiple
deterministic initial points.
Although all three metrics converge to the same interior solution,
their transient behavior differs markedly.
The Euclidean flow exhibits sharper directional changes, while the fixed metric
induces a global anisotropic bias.
In contrast, the state-dependent metric produces smoothly curved trajectories,
reflecting continuous rotation of the local eigenframe along the flow.
Since the solution is interior, these differences are purely geometric and are
not influenced by active constraints.

\begin{figure}
    \centering
    \includegraphics[width=0.75\linewidth]{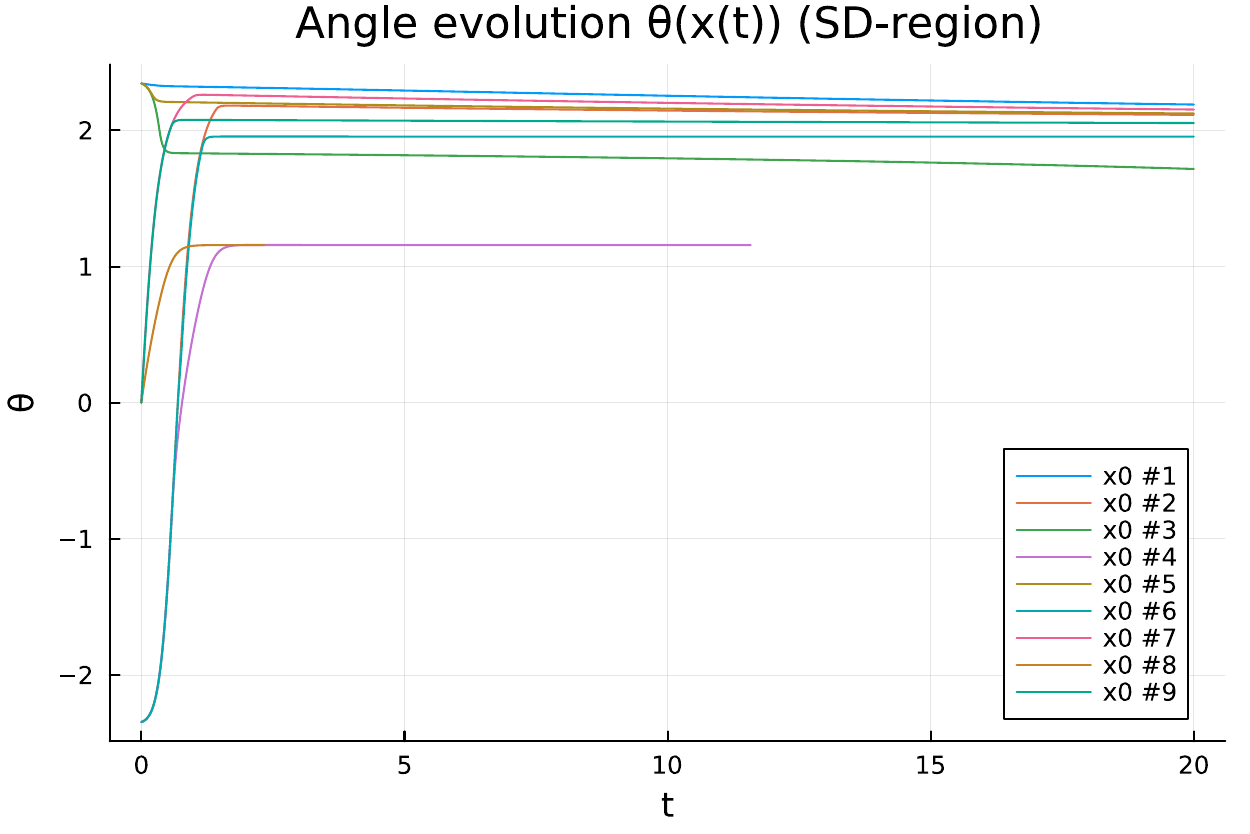}
    \caption{Evolution of the rotation angle $\theta(x(t))$ along trajectories
for the state-dependent metric in Example~3.
The angle varies during the transient phase and converges to
$\theta(\bar{x})$ as $x(t)\to \bar{x}$, illustrating that eigenframe rotation
is a transient geometric effect.}
\label{fig:nl_int_theta}
\end{figure}
The corresponding evolution of the rotation angle $\theta(x(t))$ for the
state-dependent metric is shown in Figure~\ref{fig:nl_int_theta}.
Trajectories start with different initial angles depending on their location,
but all converge to the common value $\theta(\bar{x})$ as $x(t)\to \bar{x}$.
This confirms that eigenframe rotation is a transient mechanism that shapes the
approach geometry, while asymptotic convergence remains governed by the local
linearization, in agreement with the theoretical results.

\medskip

To illustrate how the proposed state-dependent metric design extends beyond box constraints, Appendix~\ref{app:lp} presents an illustrative example based on a regularized linear program over a polyhedral feasible set.

\section{Related Work and Discussion}\label{sec:relatedwork}

Projection-based dynamical systems constitute a classical continuous-time approach
to solving variational inequalities $\mathrm{VI}(F,\mathcal S)$, where the system
evolves according to the residual between the current state and its projection
onto the feasible set $\mathcal S$. In their standard formulation, these dynamics
are defined with respect to a fixed Euclidean geometry and admit convergence
analysis via Lyapunov arguments under monotonicity-type assumptions on the
operator $F$
\cite{dupuis_dynamical_1993,nagurney_projected_1996,xia_general_2004,hu_solving_2006}.

The canonical model in this class is the projected dynamical system (PDS)
\[
\frac{dx}{dt} = \lambda\bigl(P_{\mathcal S}(x-\alpha F(x)) - x\bigr),
\]
where $P_{\mathcal S}$ denotes the Euclidean projection onto a closed convex set.
Introduced by Dupuis and Nagurney, this formulation provides a discontinuous
dynamical extension of variational inequalities, with trajectories constrained to
$\mathcal S$ through a Skorokhod-type reflection mechanism
\cite{dupuis_dynamical_1993}. Under continuity of $F$, Carathéodory solutions exist,
while uniqueness requires local Lipschitz continuity
\cite{nagurney_projected_1996, heemels_projected_2000}. Equilibria of this flow coincide exactly with solutions of $\mathrm{VI}(F,\mathcal S)$, since the fixed-point condition
$P_{\mathcal S}(x-\alpha F(x))=x$ is equivalent to the defining variational inequality
\cite{dupuis_dynamical_1993,nagurney_projected_1996}.

This projection--residual structure has been extensively studied and extended,
including formulations in Hilbert spaces, evolutionary variational inequalities,
and systems defined on prox-regular sets
\cite{friesz_dynamic_1996,cojocaru_projected_2005,hauswirth_projected_2020}. In all these settings,
however, the underlying metric structure is fixed a priori, even when generalized
solution concepts or nonsmooth analysis are required.

Projection neural networks (PNNs) realize the same dynamics in a neural or
hardware-oriented architecture,
\[
\frac{dx}{dt}= -x + P_{\mathcal S}(x-\alpha F(x)),
\]
and can be interpreted as exact continuous-time implementations (up to the choice of $\lambda$) of Euclidean
projected dynamical systems
\cite{xia_general_2004,hu_solving_2006}. Global convergence, Lyapunov stability,
and exponential rates under strong (pseudo)monotonicity have been established for
various classes of operators and feasible sets
\cite{xia_general_2004,hu_solving_2006,gao_novel_2005,hu_recurrent_2007}. These
results rely fundamentally on nonexpansiveness properties of the Euclidean
projection and on energy functions constructed within a fixed metric geometry.

A parallel line of research introduces \emph{scaled} or \emph{preconditioned}
projection methods, primarily in discrete time. Scaled gradient projection (SGP)
schemes replace the Euclidean projection with projections induced by
iteration-dependent positive definite matrices, leading to improved conditioning
and accelerated convergence for ill-conditioned problems
\cite{bonettini_scaled_2009}. From the discrete-time perspective, SD-SPNNs are closely related in spirit to variable-metric projected gradient methods, including scaled gradient projection schemes and quasi-Newton-type projection algorithms. In those methods, one selects a sequence of positive definite matrices $\{M_k\}$ and iterates
\[
x_{k+1} = P_{S,M_k^{-1}}\bigl(x_k - \alpha_k M_k F(x_k)\bigr),
\]
with $M_k$ usually adapted from curvature information or problem structure. The present framework can be viewed as a continuous-time analogue of such variable-metric iterations, where the discrete sequence $\{M_k\}$ is replaced by a smoothly varying map $M(x)$ along trajectories. The analysis in Sections~3–6 shows that, under uniform boundedness and Lipschitz regularity of $M(\cdot)$, one can retain well-posedness and Lyapunov-based stability results that mirror classical guarantees for fixed-metric projected dynamics, while explicitly accounting for the additional coupling introduced by metric variation. This highlights that many ideas from scaled projection methods have a natural continuous-time counterpart when formulated at the level of the underlying dynamical system. 

Related ideas appear in splitting and resolvent-based
algorithms for variational inequalities, where implicit metrics arise through
regularization or resolvent operators
\cite{noor_modified_2001,noor_implicit_2003}. While effective algorithmically,
these approaches modify the geometry only at the iteration level and do not yield
a continuously evolving metric within a single dynamical system.

Continuous-time dynamical systems incorporating non-Euclidean geometry have also
been studied via monotone operator theory and implicit differential inclusions,
notably through resolvent or proximal dynamics
\cite{csetnek_continuous_2020}. In these formulations, however, the induced metric
is fixed once the operator is specified. More recent flows emphasizing safety,
invariance, or robustness similarly retain Euclidean projections or fixed inner
products
\cite{allibhoy_anytime_2025}. Even when posed on manifolds or irregular domains,
the metric structure is assumed fixed throughout the evolution
\cite{hauswirth_projected_2020}.

A complementary line of work studies optimization and variational problems on Riemannian manifolds, where the metric is intrinsic to the geometry of the constraint set rather than externally designed. In Riemannian gradient and trust-region methods, the metric is typically fixed by the manifold structure, and algorithmic flexibility comes from choosing retractions or approximate second-order models, see, e.g. \cite{AbsilMahonySepulchre2008, ring_optimization_2012}. In contrast, the SD-SPNN framework treats the metric as an explicit design degree of freedom in an ambient Euclidean space: the feasible set $S$ remains a convex subset of $\mathbb{R}^n$, while the inner product $\langle \cdot,\cdot \rangle_{M(x)}$ is allowed to vary smoothly with the state. This leads to a geometry that is \emph{state-dependent but not intrinsic}: different choices of $M(\cdot)$ induce different flows on the same set $S$, even though the underlying manifold structure is unchanged. From this viewpoint, SD-SPNNs can be interpreted as a bridge between Euclidean projection neural network and more general Riemannian or preconditioned flows, retaining the simplicity of convex constraints in $\mathbb{R}^n$ while importing some of the flexibility of metric design usually associated with manifold optimization.

From this perspective, the state-dependent metric dynamics proposed in
Sections~2--4 occupy a distinct position. Classical projected dynamical systems
and projection neural networks correspond to the special case
$\matM(x)\equiv I$, while constant-metric instances $\matM(x)\equiv M\succ 0$ may
be interpreted as continuous-time analogues of scaled projection methods.
The key novelty lies in allowing the metric $\matM(x)$ to vary \emph{smoothly
along trajectories}. This introduces continuous-time geometric preconditioning
directly at the dynamical-systems level, while preserving exact equivalence
between equilibria and solutions of $\mathrm{VI}(F,\mathcal S)$ and retaining
stability guarantees under the same monotonicity assumptions on $F$.

The numerical experiments reported in this paper reflect this hierarchy.
Constant metric choices already yield significant improvements over the Euclidean
baseline in ill-conditioned problems, consistent with observations from discrete
scaled projection methods. State-dependent metrics provide additional benefits,
particularly in transient behavior and near active constraints, that cannot be
replicated by fixed scaling alone. At the same time, the effectiveness of metric
adaptation is problem dependent, underscoring that metric design is an intrinsic
component of projection neural network rather than a mere implementation detail.

Additional developments include fixed-time and predefined-time neurodynamic solvers, resolvent- and regularization-based dynamical systems for generalized and inverse variational inequalities, as well as data-driven and operator-learning formulations of VIs; see, e.g., \cite{jiang_efficient_2012,combettes_deep_2020,combettes_variational_2022,yang_two_2025,zheng_novel_2024,tran_globally_2025,anh_regularized_2025,xu_notes_2021,zhao_learning_2025}.
\medskip

From a neural network perspective, the SD-SPNN formulation inherits the architecture of classical projection neural networks, where the state dynamics are realized by interconnected units implementing linear combinations, nonlinear activations, and projection blocks. In that setting, the terminology “neural network” refers less to training from data and more to the analog or neuromorphic realization of dynamical systems that solve optimization or equilibrium problems in continuous time. The present work remains within this tradition: the state-dependent metric can be interpreted as a continuously updated weight or gain structure in the underlying circuit, modifying how the residual $x - P_{S,M^{-1}(x)}(x - \alpha M(x)F(x))$ is formed and fed back. At the same time, all convergence and stability results are derived at the dynamical-systems level, without invoking learning or data-driven adaptation. This places SD-SPNNs conceptually between classical neurodynamic solvers and purely geometric projection flows: they admit a neural implementation, but their analysis and design are governed by variational inequality structure and metric geometry rather than by training objectives.

Taken together, the preceding discussion situates the proposed state-dependent
metric projection neural network within the broader landscape of projection-based
methods for variational inequalities, clarifying both their conceptual lineage
and their point of departure from existing approaches. In contrast to classical
fixed-geometry flows and discrete preconditioning schemes, the present framework
embeds adaptive geometry directly into the continuous-time dynamics, introducing
analytical features that are not captured by existing models. We now turn to a
discussion of the implications, limitations, and future directions suggested by
this formulation.

\section{Impact and Future Research Directions} \label{sec:futurework}

The state-dependent scaled projection neural network (SD-SPNN) framework developed in this paper provides a flexible and analytically grounded setting for studying projection neural network for variational inequalities beyond fixed Euclidean geometries. Rather than introducing a single algorithmic variant, the framework treats the metric underlying the projection operator as an explicit design variable that may evolve smoothly with the state. Within this setting, Euclidean projection neural network, constant-metric scaling, and state-dependent preconditioning all appear as special cases of a single continuous-time model.

From a conceptual standpoint, this perspective departs from classical projection neural networks and scaled projection methods, where the metric is either fixed once and for all or modified externally at the iteration level. In the SD-SPNN formulation, metric design is embedded directly into the dynamical system itself. This clarifies the role of geometry in projection-based solvers for variational inequalities and highlights that the choice of metric is not merely a numerical detail but a structural component of the dynamics that interacts with stability and convergence properties.

Although the analysis here focuses on continuous-time dynamics, the SD-SPNN formulation naturally suggests new discrete-time algorithms. A first direction is to derive variable-metric projection methods that approximate the flow
\[
\frac{d x}{dt} = \lambda\bigl(P_{S,M^{-1}(x)}(x - \alpha M(x)F(x)) - x\bigr)
\]
through explicit or semi-implicit time discretizations, leading to schemes of the form
\[
x_{k+1} = P_{S,M_k^{-1}}\bigl(x_k - \alpha_k M_k F(x_k)\bigr), \qquad M_k \approx M(x_k).
\]
Such methods would connect SD-SPNNs to scaled gradient projection and quasi-Newton projection algorithms, while providing a continuous-time interpretation of their preconditioning behavior. A second direction is to develop splitting and forward–backward schemes with evolving preconditioners, in which the metric is updated along the iterates but remains regular enough for the continuous-time Lyapunov analysis to inform discrete-time convergence guarantees. Establishing rigorous links between the stability arguments used here and discrete-time convergence results in the presence of state-dependent metrics remains an open challenge.

The stability and convergence results obtained in this work rely on monotonicity or strong monotonicity assumptions on the operator $F$. Many applications of variational inequalities, however, involve weaker conditions such as pseudomonotonicity, quasimonotonicity, or structured nonmonotonicity. Extending the SD-SPNN framework to these settings raises nontrivial analytical questions, including how metric variation interacts with weaker notions of monotonicity and whether alternatives to strict Lyapunov decay—such as invariance principles, attractor-based arguments, or generalized dissipativity—can be used to obtain meaningful convergence statements. Related questions arise for generalized and inverse variational inequalities, equilibrium problems, and models with uncertainty or noise, where the interaction between state-dependent metrics and problem structure may be particularly subtle.

Another important direction concerns the systematic design of the state-dependent metric $M(\cdot)$. In the present analysis, $M(\cdot)$ is assumed to be prescribed and to satisfy uniform boundedness and smoothness conditions, but is otherwise arbitrary. This leaves considerable freedom in choosing metrics that reflect problem structure, local conditioning, or trajectory information. Future work may investigate adaptive or data-driven strategies for metric selection, for instance by coupling the SD-SPNN dynamics with estimators of curvature or sensitivity, or by restricting $M(x)$ to structured classes (diagonal, low-rank, block-diagonal) that balance geometric expressiveness with computational tractability. Bridging geometry-aware projection neural network with adaptive optimization and operator-learning methods, while maintaining analytical control over the resulting coupled system, is a promising but challenging avenue.

The present framework also has limitations. The smoothness and uniform boundedness assumptions on the metric, while natural for Lyapunov-based analysis, may be restrictive in applications where nonsmooth or abruptly changing preconditioners are desirable. Furthermore, scalability to very high-dimensional problems and the computational cost associated with evaluating $M(x)$ and computing metric projections are important practical considerations. These costs depend heavily on the structure of $M(x)$ and on the tractability of the resulting projection subproblems. Addressing these computational aspects, for example through structure-exploiting metrics, approximate projections, or inexact dynamics, is an essential step toward large-scale applications of state-dependent metric projection methods for variational inequalities.

\section*{Funding}
This research received no external funding.

\section*{Data and Code Availability}
No external data were used in this study. All numerical results were generated by the algorithms described in the paper. The code used to produce the numerical experiments is available from the corresponding author upon reasonable request.

\section*{Acknowledgements}

The author acknowledges the support of King Fahd University of Petroleum \& Minerals (KFUPM) and the Interdisciplinary Research Center for Smart Mobility and Logistics at KFUPM. The author also thanks Professor Qamrul Hasan Ansari for his valuable feedback and insightful input during the development of this work.

\printbibliography

\appendix

\section{An Illustrative Design Example: Regularized Linear Programs on Polyhedra}
\label{app:lp}

This appendix provides an illustrative design example demonstrating how
state-dependent metrics can be constructed and interpreted in relation to
polyhedral feasible sets.
The purpose of this example is not to propose an algorithm for linear
programming, but rather to show how the proposed state-dependent metric
projection framework naturally extends to polyhedral constraints and how
metric design influences transient geometry under projection-based dynamics.

\subsection{Regularized linear programs as strongly monotone variational inequalities}

Consider a linear program over a nonempty closed polyhedron
\[
\mathcal P := \{x \in \mathbb R^n : A_p x \le b_p\},
\]
with cost vector $c \in \mathbb R^n$.
To obtain a well-posed equilibrium problem with a unique solution, we consider
the Tikhonov-regularized formulation
\begin{equation}
\label{eq:reg_lp}
\min_{x \in \mathcal P} \; c^\top x + \frac{\mu}{2}\|x\|^2,
\qquad \mu > 0.
\end{equation}
The regularization is introduced solely to obtain a strongly monotone problem
with a unique equilibrium and does not alter the nature of the underlying linear
constraints. Problem~\eqref{eq:reg_lp} is equivalent to the variational inequality
$\mathrm{VI}(F_\mu,\mathcal P)$ with affine operator
\[
F_\mu(x) := c + \mu x,
\]
which is strongly monotone with constant $\mu$.
As a result, the variational inequality admits a unique solution
$x^\star \in \mathcal P$, and the continuous-time and discrete-time
projection-based dynamics studied in the main text apply directly.

\subsection{Proximal interpretation and projection geometry}

The solution of the regularized problem~\eqref{eq:reg_lp} admits a simple and
useful geometric interpretation.
Indeed, $x^\star$ can be written as
\begin{equation}
\label{eq:prox_proj}
x^\star
= \operatorname*{argmin}_{x \in \mathcal P}
\left\{ \frac{\mu}{2}\|x + \mu^{-1}c\|^2 \right\}
= \operatorname{Proj}_{\mathcal P}\!\left(-\mu^{-1}c\right),
\end{equation}
that is, $x^\star$ is the Euclidean projection of $-\mu^{-1}c$ onto the feasible
polyhedron.
Equivalently, $x^\star$ is the Euclidean proximal point of the indicator function
$\iota_{\mathcal P}$ evaluated at $-\mu^{-1}c$. This interpretation highlights that the equilibrium selected by the regularization is entirely determined by the geometry of $\mathcal P$. In contrast, the projection-based dynamics governing the approach to $x^\star$ are not unique and may depend on the choice of metric, even though the limiting
solution remains unchanged.

\subsection{State-dependent metric design for polyhedral constraints}

Within the proposed framework, the dynamics evolve according to a relaxed
metric projection of the form
\[
\frac{d x}{dt} = \lambda\bigl(
P_{\mathcal P}^{M(x)^{-1}}\bigl(x - \alpha M(x)F_\mu(x)\bigr) - x
\bigr),
\]
where $M(x)$ is a symmetric positive definite, state-dependent metric.
While the solution $x^\star$ of the regularized problem is fixed by
\eqref{eq:prox_proj}, the choice of $M(x)$ governs how trajectories interact
with the faces of the polyhedron during the transient phase. In the polyhedral setting, the geometry experienced by the trajectory changes as
different constraints become nearly active. This motivates metric designs in which the eigenvalues remain bounded and fixed, while the eigenvectors rotate smoothly as a function of the state. Such metrics adapt to the local geometry of $\mathcal P$ without encoding objective-specific information, thereby isolating geometric effects from
conditioning.  From this perspective, state-dependent metrics provide a mechanism for shaping boundary interaction and directional changes induced by projection, rather than
for accelerating convergence or altering the equilibrium.

\subsection{Illustrative numerical example}
\label{app:lp:numerics}

We consider the polyhedral feasible set $\mathcal P \subset \mathbb R^2$ defined by
\[
\mathcal P := \{x \in \mathbb R^2 : A_p x \le b_p\},
\qquad
A_p =
\begin{pmatrix}
1 & 0\\
0 & 1\\
-1 & 0\\
0 & -1\\
1 & 1\\
-1 & 2
\end{pmatrix},
\qquad
b_p =
\begin{pmatrix}
1\\
1\\
0.2\\
0.5\\
1.2\\
1.6
\end{pmatrix}.
\]
Thus $\mathcal P$ can be viewed as a box-type region with two additional linear
inequalities, and in particular it enforces the lower bounds $x_1 \ge -0.2$ and
$x_2 \ge -0.5$.  We start from the linear program
\[
\min_{x\in\mathcal P} \; c^\top x,
\qquad
c=
\begin{pmatrix}
1\\
0.35
\end{pmatrix},
\]
and use the Tikhonov-regularized model
\begin{equation}
\label{eq:app_reg_lp_instance}
\min_{x\in\mathcal P}\; c^\top x + \frac{\mu}{2}\|x\|^2,
\qquad \mu=0.08,
\end{equation}
which yields the strongly monotone affine operator $F_\mu(x)=c+\mu x$.
For this instance, the unique solution is the Euclidean projection
\[
x^\star = \operatorname{Proj}_{\mathcal P}\!\left(-\mu^{-1}c\right),
\]
which is highlighted in the trajectory plots.  We integrate the relaxed metric-projection dynamics
\[
\frac{d x}{dt} = \lambda\bigl(
P_{\mathcal P}^{M(x)^{-1}}\bigl(x-\alpha M(x)F_\mu(x)\bigr)-x
\bigr),
\]
with $\lambda=1$ and $\alpha=0.1$, comparing:
(i) the Euclidean metric $M(x)=I$,
(ii) a fixed metric $M(x)\equiv M(x^\star)$, and
(iii) a state-dependent metric $M(x)$ whose eigenframe rotates smoothly with the state.
All configurations use identical parameters and differ only through the metric choice.

\medskip

In this numerical example, the role of the metric is to modulate how projection onto
the feasible polyhedron $\mathcal P=\{x:Ax\le b\}$ interacts with the regularized
operator $F_\mu(x)=c+\mu x$.
All metrics are chosen independently of the objective vector $c$ and depend only on
geometric considerations. The Euclidean metric $M(x)=I$ serves as a baseline and corresponds to the standard
orthogonal projection onto $\mathcal P$.
As a first non-Euclidean variant, we consider a fixed metric $M(x)\equiv M^\star$,
obtained by freezing a state-dependent metric at the equilibrium point $x^\star$.
This fixed metric introduces a global anisotropy but does not adapt along the
trajectory. The state-dependent metric $M(x)$ is constructed so that its eigenvalues are fixed, while its eigenvectors rotate smoothly as a function of the state.
Concretely, $M(x)$ is defined as
\[
M(x)=R(\theta(x))^\top D\,R(\theta(x)),
\]
where $D$ is a constant diagonal matrix with prescribed positive entries and
$R(\theta)$ is a planar rotation matrix.
The angle $\theta(x)$ is a smooth scalar function of the state, designed to vary
across the feasible region.
This induces a continuous rotation of the metric eigenframe as the trajectory
evolves. This construction allows the metric to adapt to the local geometry encountered near different faces of the polyhedron, without altering the conditioning of the system.
All three metrics—Euclidean, fixed, and state-dependent—share the same equilibrium
$x^\star$.
Their effect is therefore confined to the transient geometry of the trajectories,
illustrating how a state-dependent metric can influence projection behavior in a
linear program without changing the underlying solution.

Figure~\ref{fig:lp_traj_compare} shows that all trajectories converge to the same
equilibrium $x^\star$, while exhibiting distinct transient geometry.
In particular, the state-dependent metric alters how the flow interacts with
the faces of $\mathcal P$, producing visibly different boundary interaction and
direction changes compared to Euclidean and fixed-metric baselines.

\begin{figure}[t]
\centering
\includegraphics[width=\textwidth]{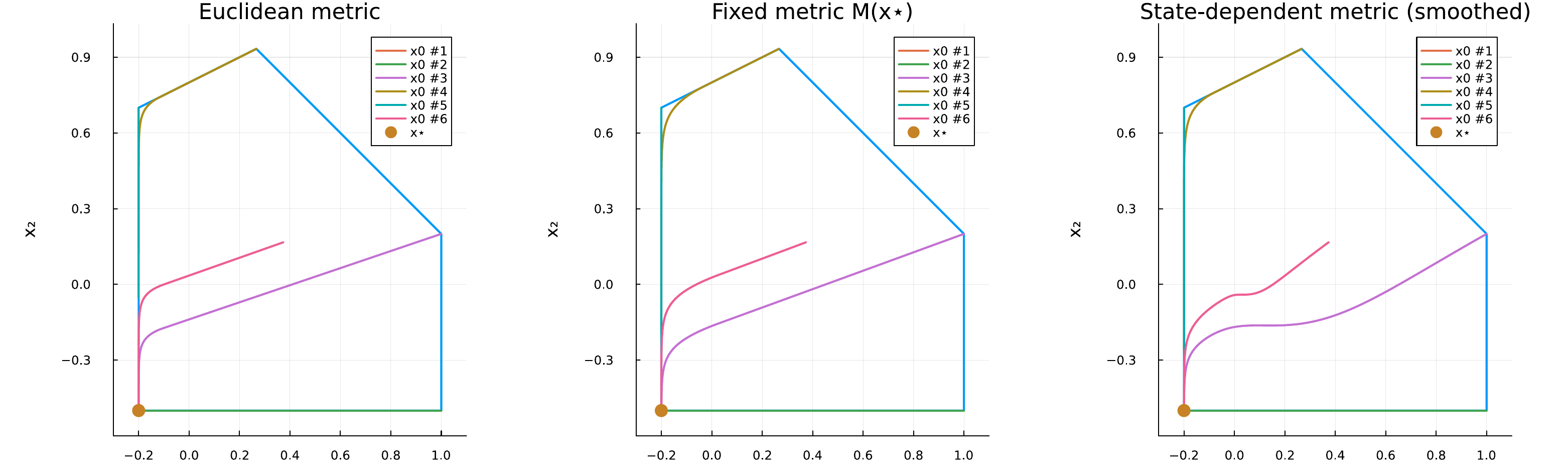}
\caption{Regularized LP on the polyhedron $\mathcal P$:
trajectory comparison for Euclidean, fixed, and state-dependent metrics
(with $\alpha=0.1$). The marked point denotes the unique equilibrium $x^\star$
of \eqref{eq:app_reg_lp_instance}.}
\label{fig:lp_traj_compare}
\end{figure}

For a representative initialization (not equal to $x^\star$), the evolution of the
residual norm and the Lyapunov function $V(x)=\tfrac12\|x-x^\star\|^2$ is reported in
Figure~\ref{fig:lp_residual_lyapunov}.
These diagnostics confirm consistency with the expected behavior: the residual decreases
along the trajectories, and $V(x(t))$ decays as the state approaches $x^\star$.

\begin{figure}[t]
\centering
\includegraphics[width=\textwidth]{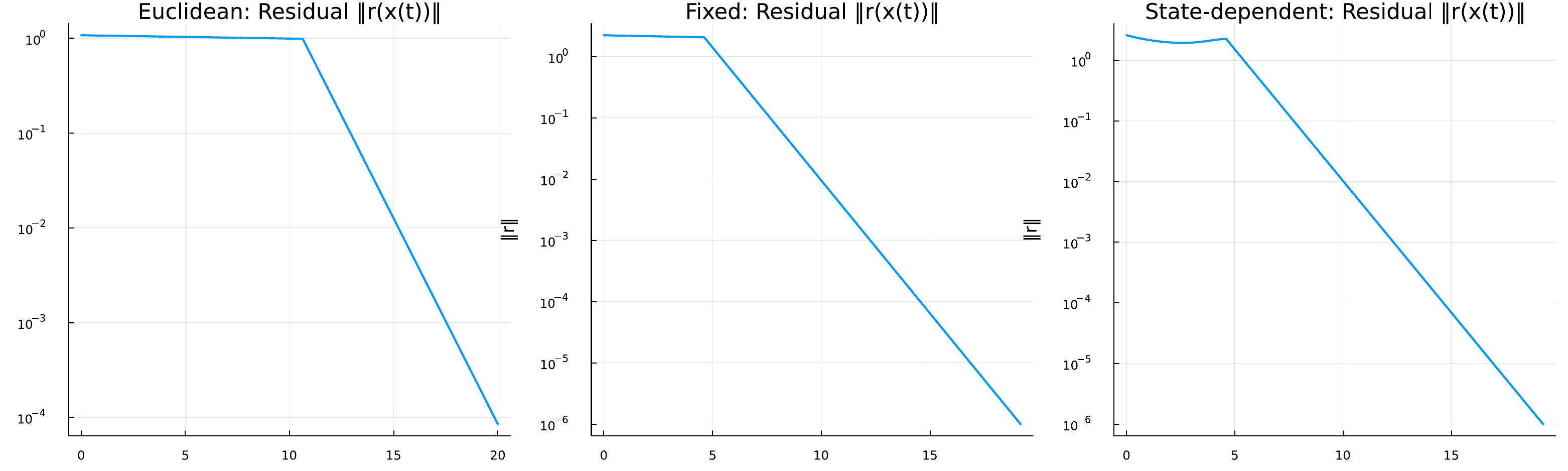}\\[1ex]
\includegraphics[width=\textwidth]{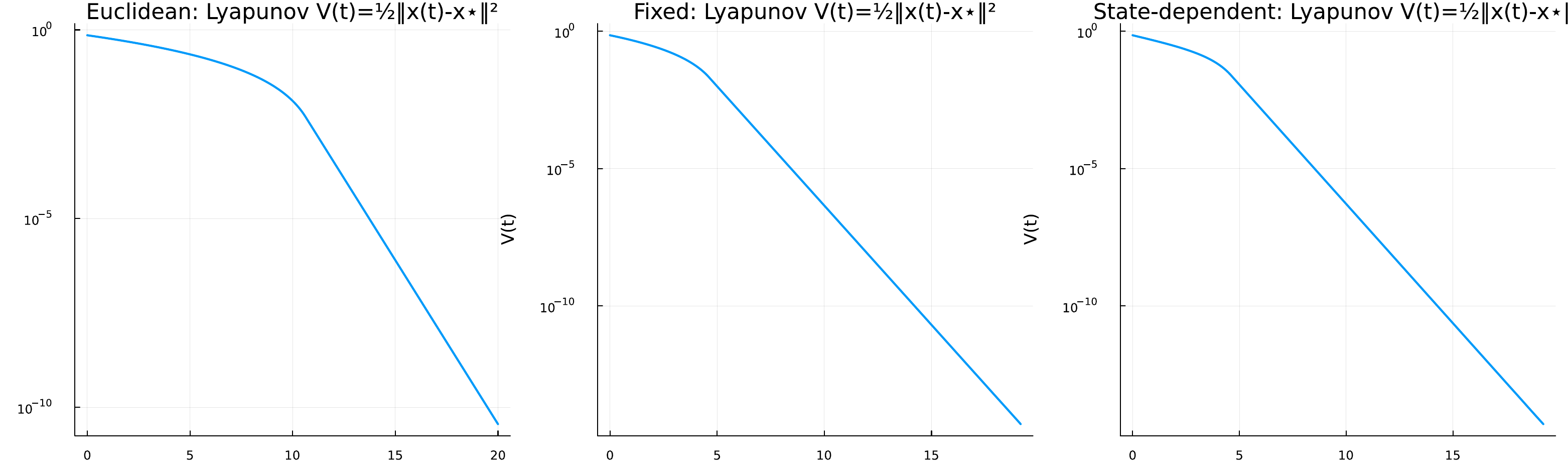}
\caption{Regularized LP on the polyhedron $\mathcal P$ (ODE mode, $\alpha=0.1$):
(residual) $\|r(x(t))\|$ and (Lyapunov) $V(x(t))=\tfrac12\|x(t)-x^\star\|^2$
for a representative initialization across Euclidean, fixed, and state-dependent metrics.}
\label{fig:lp_residual_lyapunov}
\end{figure}

\end{document}